\numberwithin{equation}{section} 
\newcommand{\btheta}{{\bar \theta}}
\renewcommand\d{\partial}
\def\t{\tau}
\def\eps{\varepsilon }
\def\e{\varepsilon} 
\renewcommand\d{\partial}
\newcommand\R{\mathbb R}
\def\t{\tau}
\def\eps{\varepsilon}
\def\e{\varepsilon}
\newcommand\br{\begin{remark}}
\newcommand\er{\end{remark}}
\newcommand\bp{\begin{pmatrix}}
\newcommand\ep{\end{pmatrix}}
\newcommand\be{\begin{equation}}
\newcommand\ee{\end{equation}}
\newcommand\ba{\begin{equation}\begin{aligned}}
\newcommand\ea{\end{aligned}\end{equation}}
\newcommand{\bap}{\begin{app}}
\newcommand{\eap}{\end{app}}
\newcommand{\begs}{\begin{exams}}
\newcommand{\eegs}{\end{exams}}
\newcommand{\beg}{\begin{example}}
\newcommand{\eeg}{\end{exaplem}}
\newcommand{\bpr}{\begin{proposition}}
\newcommand{\epr}{\end{proposition}}
\newcommand{\bt}{\begin{theorem}}
\newcommand{\et}{\end{theorem}}
\newcommand{\bc}{\begin{corollary}}
\newcommand{\ec}{\end{corollary}}
\newcommand{\bl}{\begin{lemma}}
\newcommand{\el}{\end{lemma}}
\newcommand{\bd}{\begin{definition}}
\newcommand{\ed}{\end{definition}}
\newcommand{\brs}{\begin{remarks}}
\newcommand{\ers}{\end{remarks}}
\newtheorem{theo}{Theorem}[section]
\newtheorem{exam}[theo]{Example}
\newtheorem{exams}[theo]{Examples}
\numberwithin{equation}{section}
\newcommand{\CalT}{\mathcal{T}}
\newcommand{\RR}{{\mathbb R}}
\newcommand{\BbbR}{{\mathbb R}}
\newcommand{\const}{\text{\rm constant}}
\newcommand{\Span}{{\rm Span }}
\newtheorem{theorem}{Theorem}[section]
\newtheorem{proposition}[theorem]{Proposition}
\newtheorem{corollary}[theorem]{Corollary}
\newtheorem{lemma}[theorem]{Lemma}
\newtheorem{definition}[theorem]{Definition}
\newtheorem{example}[theorem]{Example}
\newtheorem{remark}[theorem]{Remark}
\newcommand\cU{{\cal  U}}
\newcommand\cH{{\cal  H}}
\newcommand\cV{{\cal  V}}
\newcommand\cW{{\cal  W}}
\newcommand\cC{{\cal  C}}
\newcommand\cR{{\cal  R}}
\newcommand\cG{{\cal  G}}
\newcommand\cK{{\cal  K}}
\newcommand\cM{{\mathcal M}}
\newcommand\cZ{{\cal  Z}}
\title{
Stability of detonation profiles in the ZND limit }
\author{\sc \small 
Kevin Zumbrun\thanks{Indiana University, Bloomington, IN 47405;
kzumbrun@indiana.edu:
Research of K.Z. was partially supported
under NSF grants no. DMS-0300487 and DMS-0801745.
 }}
\begin{document}

\maketitle


\begin{abstract}
Confirming a conjecture of Lyng--Raoofi--Texier--Zumbrun,
we show that stability of strong detonation waves in the ZND,
or small-viscosity, limit is equivalent to stability of the limiting 
ZND detonation together with stability of the viscous profile 
associated with the component Neumann shock.
More, on bounded frequencies the 
nonstable eigenvalues of the viscous detonation wave converge
to those of the limiting ZND detonation, 
while on frequencies of order one over viscosity, they converge
to one over viscosity times those of the associated viscous Neumann shock.
This yields immediately a number of examples of instability and Hopf
bifurcation of reacting Navier--Stokes detonations
through the extensive numerical studies of ZND stability in the detonation
literature.
\end{abstract}

\tableofcontents



\bigbreak
\section{Introduction}

In one-dimensional, Lagrangian coordinates, the 
reactive Navier--Stokes (rNS) equations modeling reacting flow for a
one-step reaction may be written in abstract form as
\ba \label{rNS}
u_t + f(u)_x &= \eps (B(u)u_{x})_x +kq\varphi(u)z,\\
z_t &= \eps (C(u,z)z_x)_x -k\varphi(u)z,
\ea
where $u$, $f$, $q\in \BbbR^n$, $B\in \BbbR^{n\times n}$,
$z$, $k$, $C$, $\varphi \in \BbbR^1$, and $k,\, \eps>0$ 
\cite{Z1,LyZ1,LyZ2,LRTZ,TZ4}.
Here, $u$ comprises the gas-dynamical variables of
specific volume, particle velocity, and total energy; $z$
measures mass fraction of unburned reactant or, more generally,
``progress'' of a single reaction involving multiple reactants \cite{FD,LyZ1};
$\varphi(u)$ is an ``ignition function'', monotone
increasing in temperature, and usually
assumed for fixed density to be zero below a certain ignition temperature 
and positive above;
$q$ comprises quantities produced in reaction, 
in particular heat released; and $k$ corresponds to reaction rate.
Coefficients $B$ and $C$ model transport effects of, respectively,
viscosity and heat conduction, and species diffusion,
and $\eps$ measures
relative size of transport vs. reaction coefficients, 
typically quite small.

A {\it right-going viscous strong detonation wave} is a 
smoth traveling-wave solution
\be\label{profile}
(u,z)(x,t)=(\bar u, \bar z)(x-st), 
\quad
\lim_{x\to \pm \infty} (\bar u,\bar z)(x)=(u_\pm, z_\pm)
\ee
of solutions of \eqref{rNS} with speed $s>0$
connecting a burned state on the left to an unburned state on the right,
\be\label{zcond}
z_- = 0, \quad z_+ =1,
\ee
with necessarily
\begin{equation} \label{phicond}
\phi(u_-)>0,\quad \phi(u_+)=\phi'(u_+)=0,
 \end{equation}
and satisfying the extreme Lax characteristic conditions
\be\label{Lax}
a_{1}^-< \dots <a_{n-1}^-<s<a_n^-,
\quad
a_{1}^+<\dots<a_n^+ <s,
\ee
where $a_j^\pm$ denote the eigenvalues of
$df(u_\pm)$, ordered by increasing real part.
Left-going viscous detonation waves satisfy symmetric conditions
obtained by reflection, $x\to -x$.

Multi-step reactions may be modeled by the same equations
with vectorial reaction variable $z\in \BbbR^m$, 
and coefficients $q$, $C$, $\varphi$, $k$
modified accordingly.
Likewise, the functions $f$, $\phi$, $B$ may be modified to
depend, more realistically, also on $z$, reflecting the different
chemical makeup of the gas after reaction, with no essential change
at a mathematical level.
For further discussion, see, e.g., \cite{CF,FD,GS,Z1,LyZ1,LyZ2,LRTZ,TZ4,HuZ2}.

A standard simplification in detonation theory is to neglect the
small constant $\eps$ and consider instead the formal $\eps=0$ limit,
or Zeldovich--von Neumann-Doering (ZND) model 
\ba \label{ZND}
u_t + f(u)_x &= kq\varphi(u)z,\\
z_t &= -k\varphi(u)z.
\ea
Indeed, there is by now a tremendous body of literature on this model;
see for example \cite{CF,FD,FW,Er1,Er2,LS,BMR,AT,AlT,KS} and references therein.
The corresponding object to a viscous detonation wave
for the (ZND) model is a {\it right-going strong 
ZND detonation} 
$\bar u^0$ 
of form \eqref{profile}--\eqref{phicond} satisfying \eqref{ZND},
smooth except at a single shock
discontinuity at (without loss of generality) $x=0$,
known as a ``Neumann shock'' \cite{CF,M,GS,LyZ1,LyZ2}, where
$u$ jumps from $u_*$ to $u_+$ as $x$ crosses zero from left to right,
with $z\equiv 1$. 
We have the intuitive picture \cite{CF} of a shock, or ``reaction spike'',
compressing a quiescent mixture and heating it to ignition point, followed
by a slow ``reaction tail'' in which the reaction proceeds until all
reactant is burned, while, meanwhile, $u$ varies from $u_*$ to $u_-$.

A ZND detonation profile is determined implicitly \cite{CF,HuZ2} 
by the property, obtained by integrating the traveling-wave ODE  
\ba \label{ZNDODE}
-su' + f(u)' &= kq\varphi(u)z,\\
-sz' &= -k\varphi(u)z
\ea
and adding $q$ times the second equation to the first,
that $-s(u+qz) + f(u)\equiv \const$, which, together with
$z=1$ for $x\ge 0$ and $z(-\infty)=0$ implies that
\be\label{ZNDsolve}
-su_+ +f(u_+)= -su_* +f(u_*)= -s(u_- - q) +f(u_-),
\ee
giving a unique $u_-$ and profile $\bar u^0$, $x\le 0$, 
for each Neumann shock $(u_*,u_+)$ of speed $s$,
so long as $df(u)-sI$ remains invertible 
for all $0\le z\le 1$ along the curve determined
by 
\be\label{sol}
-s(u+qz) + f(u)\equiv -s(u_+) +f(u_+).
\ee
For, solving \eqref{sol} for $u=u(z)$ by the Implicit Function Theorem
then yields the profile on $x\le 0$ by solution of the second equation
$z'=(-k/s)\phi(u(z))z$, a scalar equation with nonvanishing righthand
side, so long as $u$ remains in the region for which $\phi(u)>0$.

A natural question is the relation between the formally limiting
(ZND) equations and the behavior of the full (rNS) equations as $\eps\to 0$.
At the level of existence of detonation profiles, 
this was investigated by Majda \cite{M} for a simplified model
with $u$, $z\in \RR^1$ and $C\equiv0$ using direct, planar phase portrait
analysis, and extended to the physical (rNS) model
by Gardner \cite{G} using Conley index techniques and
Gasser--Szmolyan \cite{GS} by geometric singular perturbation theory.
More recently, Williams \cite{W} has revisited the existence problem
using more quantitative singular perturbation methods, generating detailed
matched asymptotic expansions to all orders.
In each case, with varying levels of detail, the result is that for each
strong detonation profile $(\bar u^0,\bar z^0)$ of the (ZND) model with 
physical choice of $f$, there exists a family 
$(\bar u^\eps,\bar z^\eps)$ of strong detonation profiles converging
away from $x=0$ as $\eps\to 0$ to $(\bar u^0,\bar z^0)$, and
near $x=0$ to a viscous shock profile for the associated Neumann shock,
in microscopic variables $\tilde x=x/\eps$.

In the present paper, using singular perturbation/asymptotic 
Evans function techniques developed 
in \cite{PZ,HLZ,CHNZ,BHZ,HLyZ1,HLyZ2,OZ1,Z3},
we investigate the {\it stability} of profiles
$(\bar u^\eps, \bar z^\eps)$ in the ZND limit $\eps\to 0$ for
a class of models \eqref{rNS} including both the Majda model\footnote{
Strictly speaking, a variant \cite{L} with nonzero $z$-diffusion $C>0$;
however, the extension to the original Majda model is straightforward,
substituting the weighted norm analysis of Sattinger \cite{Sa} 
for the pointwise analysis of \cite{LRTZ} in order to 
conclude nonlinear stability.
}
studied in \cite{M,L,RV,LyZ2,LRTZ}
and the physical (rNS) equations studied in \cite{G,GS,W,Z1,LyZ1,JLW,TZ4}.
Our conclusion, confirming a conjecture of \cite{LRTZ}, is that 
(linear and nonlinear) 
{\it stability in the ZND limit is equivalent to viscous stability of
the component Neumann shock profile together with hyperbolic stability 
of the associated ZND detonation.}

Together with the results of \cite{HLyZ1} verifying viscous stability
of ideal gas shocks, this gives a rigorous connection between viscous
stability of (rNS) detonations and inviscid stability of the associated
(ZND) detonations, yielding immediately a number of stability and
bifurcation results through the extensive (ZND) literature.
Specialized to the Majda model, it recovers the sole previous result,
due to Roquejoffre and Vila \cite{RV}.

\subsection{Assumptions}

Loosely following \cite{Z1,Z2,MaZ3,MaZ4,LRTZ,TZ4}, we make the assumptions:

\quad (H0) \quad  $f$, $B$, $\phi$, $C\in C^{2}$.
%
\medbreak
\quad (H1)  \quad 
The eigenvalues of $df(u)$ are real, distinct, and 
different from $s$,
for all $u$ near the image
of ZND profile $\bar u^0$, in particular for $u=u_-,u_*,u_+$.
\medbreak
\quad (H2)  \quad 
$B=\begin{pmatrix} 0 & 0 \\ 0 & b \end{pmatrix}$, with
$\Re \sigma b $, $\Re \sigma C \ge \theta>0$,
and 
$df= \begin{pmatrix} df_{11} & df_{12} \\ df_{21} & df_{22} \end{pmatrix}$, 
$df_{11}$ and $df_{12}$ constant,
with the eigenvalues of $df_{11}$ real, semisimple, 
and of one sign relative to $s$, for all $u$ under consideration (i.e.,
near a given detonation profile).
\medbreak
\quad (H3)  \quad 
$\Re \sigma \Big( -i\xi df(u) -\xi^2 B(u)\Big)\le \frac{-\theta \xi^2}{1+\xi^2}$,
$\theta>0$, for all $\xi\in \RR$, and for all $u$ near the image
of ZND profile $\bar u^0$, in particular for $u=u_-,u_*,u_+$.

\br\label{H4mk}
By block upper-triangular structure, we obtain from (H3) also
\be\label{H4}
\Re \sigma \Big( -i\xi \begin{pmatrix}df(u) & 0 \\ 0 & 0\end{pmatrix}
 -\xi^2 
\begin{pmatrix}B(u) & 0 \\ 0 & C(u,z)\end{pmatrix}
+
\begin{pmatrix} 0  & qk\phi(u) \\ 0 & -k\phi(u)\end{pmatrix}
\Big)\le \frac{-\theta \xi^2}{1+\xi^2},
\ee
$\theta>0$, for all $\xi\in \RR$, and for all $u$ near the image
of ZND profile $\bar u^0$, in particular for $u=u_-,u_*,u_+$,
an assumption in the nonlinear stability/bifurcation 
analysis of \cite{TZ4}.
\er


Regarding connecting profiles, we make the further assumptions:

\medbreak
\quad (P1)  \quad 
There exists a ZND profile $u(x,t)=\bar u^0(x-st)$ of \eqref{ZND},
smooth for $x\gtrless 0$, with $\bar u^0(0^-=u_*)$ and 
$\bar u^0(0^+=u_+)$, that is transversal in the sense that
$df(\bar u^0)-sI$ is invertible for all $x$, so that the profile
is locally unique by \eqref{ZNDsolve}.
\medbreak
\quad (P2)  \quad 
There exists a viscous Neumann shock profile 
\be\label{Neumannprof}
u(x,t)=\hat u\Big(\frac{x-st}{\eps}\Big),
\qquad
\lim_{x\to - \infty}\hat u(x)=u_*, \quad 
\lim_{x\to + \infty}\hat u(x)=u_+
\ee
of the associated nonreacting Navier-Stokes equations
$u_t + f(u)_x = \eps (B(u)u_{x})_x$, i.e., a connection
between $u_*$, $u_+$
of the traveling-wave ODE $B(\hat u)\hat u'= f(\hat u)-f(u_+)-s(\hat u-u_+)$,
that is transversal in the sense that $df_{11}(\hat u)-s$ (constant
by assumption (H2)) is invertible.\footnote{ 
This implies that the traveling wave ODE is nondegenerate type;
the profile is then necessarily transversal 
by the extreme shock assumption \eqref{Lax} \cite{MaZ3}.}
\medbreak
\quad (P3)  \quad 
For $\delta>0$ fixed and $\eps>0$ sufficiently small, there exist 
viscous detonation profiles $\bar u^\eps$ of \eqref{rNS}, \eqref{profile}
satisfying for some $C,\, \theta>0$, and $0\le k\le 2$,
\be\label{zndapprox}
|\partial_x^k\big((\bar u^\eps,\bar z^\eps)-(\bar u^0, \bar z^0)\big)|(x)\le C\eps e^{-\theta |x|}
\quad \hbox{\rm for}\; x\le -\delta,
\ee
\be\label{neumannapprox-}
|\partial_x^k\big((\bar u^\eps(x),\bar z^\eps)-(\hat u(x/\eps),1)\big)|\le C\eps
+ C\eps^{1-k} e^{-\theta |x|/\eps} 
\quad \hbox{\rm for}\; -\delta\le x\le  0,
\ee
and
\be\label{neumannapprox+}
|\partial_x^k\big((\bar u^\eps(x),\bar z^\eps)-(\hat u(x/\eps),1)\big)|
\le C\eps^{1-k} e^{-\theta |x|/\eps}
\quad \hbox{\rm for}\; x\ge  0.
\ee

\medbreak

\begin{exam}\label{idealeg}
The physical single-species reactive compressible Navier--Stokes equations, 
in Lagrangian coordinates, are \cite{Ch,TZ4}
\begin{equation}\label{rNSeg}
\left\{ \begin{aligned}
 \d_t \tau - \d_x u & = 0,\\
 \d_t u + \d_x p  & = \d_x (\nu\tau^{-1} \d_x u),\\
 \d_t E + \d_x(pu) & = 
\d_x \big(qd \t^{-2} \d_x z+
\kappa \tau^{-1} \d_x T + \nu\tau^{-1} u \d_x u\big),\\
 \d_t z + k \phi(T) z & = \d_x (d \t^{-2} \d_x z),\\
\end{aligned}\right.
\end{equation}
where $\tau>0$ denotes specific volume, $u$ velocity, 
$ E = e+ \frac{1}{2} u^2 + qz>0  $
total specific energy, $e>0$ specific internal energy,  
and $0 \leq z \leq 1$ mass fraction of the reactant.
Here, $\nu >0$ is a viscosity coefficient, $\kappa> 0$ and $d>0$
are respectively coefficients of heat conduction and species diffusion, $k > 0$ represents the rate of the reaction, and $q$ is the heat release parameter,
with $q>0$ corresponding to an exothermic reaction and $q<0$ to
an endothermic reaction.  
Finally, $T=T(\tau,e,z)>0$ represents temperature and $p=p(\tau,e,z)$ pressure.
 
Under the standard assumptions of
a reaction-independent ideal gas equation of state,
$ p=\Gamma \tau^{-1} e$,
$T=c^{-1} e$, 
where $c>0$ is the specific heat constant and
$\Gamma$ is the Gruneisen constant, and a smooth ignition function  
 $\phi$ vanishing identically for $T \leq T_i$ and strictly positive for $T > T_i,$ 
it is shown in \cite{MaZ3,TZ4} that each of (H0)--(H3)
are satisfied.  Likewise, (P1)--(P3) have been verified in \cite{HuZ2,W}
in this case.  
\end{exam}

\br\label{redundant}
We expect that (P3) can be shown by an argument like that of \cite{W}
to be a general consequence of (H0)--(H3) and (P1)--(P2)
and not an independent assumption.
Note that (P3) typically requires more regularity than we assume in (H0).
\er

\br\label{semirmk}
In (H1), it is enough that eigenvalues be semisimple.
In the present, spectral, analysis, 
we use only that eigenvalues are real and distinct
from $s$ (used to separate decaying and growing slow modes in
the analysis of Section \ref{IIb}).  
In the linearized and nonlinear stability 
analysis for viscous detonations, one may relax the
strict hyperbolicity assumption of \cite{TZ4} to
semisimplicity, as discussed for the viscous shock case
in \cite{MaZ4,Z1}.
\er

\subsection{Main results}
Recall that, associated with the linearized eigenvalue problems for
ZND and rNS detonations are the {\it Evans--Lopatinski determinant} $D_{ZND}$
and the  {\it Evans determinant } $D^\eps_{rNS}$, 
each analytic on $\Re \lambda \ge -\eta<0$, with zeros corresponding to
normal modes of the respective linear problems;
see Sections \ref{evansZND} and \ref{evansrNS} for
precise definitions.
Likewise, there is an Evans determinant $D_{NS}$ associated with
the linearized eigenvalue problem for the associated viscous Neumann shock 
of the nonreacting Navier--Stokes equations (NS); see Section \ref{II}.

Weak Evans--Lopatinski stability of ZND detonations is defined as nonvanishing
of $D_{ZND}$ on $\Re \lambda>0$ and strong Evans--Lopatinski stability
as nonvanishing on  $\Re \lambda \ge 0$ except for a simple zero
at $\lambda=0$ \cite{Er1,Er2,Z1,JLW}. 
Similarly, weak Evans stability of rNS detonations is defined
as nonvanishing of $D^\eps_{rNS}$ 
on $\Re \lambda > 0$ and strong Evans stability as nonvanishing
on $\Re \lambda \ge 0$ except for a simple zero at $\lambda=0$
\cite{Z1,LyZ1,LyZ2,JLW,LRTZ,TZ4}.
Likewise, weak Evans stability of the associated viscous Neumann shock is
defined as nonvanishing of $D_{NS}$ on $\Re \lambda >0$ and
strong stability as nonvanishing on $\Re \lambda \ge 0$ except
for a simple zero at $\lambda=0$: equivalently, nonvanishing of
$\frac{D_{NS}(\lambda)}{\lambda}$ on $\Re \lambda \ge 0$ \cite{MaZ3,Z1,Z2,Z3}.

The following result established in \cite{LRTZ,TZ4}
equates strong Evans stability with linear and nonlinear stability 
of rNS detonations.
A corresponding result holds for the component viscous Neumann
shock \cite{MaZ4,Z2,R,HR,HRZ,RZ,Z4}.

\bpr[\cite{TZ4}]\label{stab}
Given (H0)--(H3), a viscous detonation profile \eqref{profile}
of \eqref{rNS}
is $L^1\cap L^p\to L^p$ 
linearly orbitally stable, $p\ge 1$, if and only if
it is strongly Evans stable,
in which case it is $L^1\cap H^3\to L^p\cap H^3$ asymptotically 
orbitally stable, for $p > 1,$  with 
\ba\label{nonest}
|(\tilde u,\tilde z)(\cdot, t)-(\bar u,\bar z)(\cdot - st-\alpha(t))|_{L^p}
&\le C|(\tilde u_0,\tilde z_0)-(\bar u,\bar z)|_{L^1\cap H^3}(1+t)^{- \frac{1}{2}(1-\frac{1}{p})},\\
|\alpha(t)| &\le C|\tilde U^\e_0-\bar U^\e|_{L^1\cap H^3},\\
|\dot \alpha(t)| &\le C|\tilde U^\e_0-\bar U^\e|_{L^1\cap H^3}(1+t)^{-\frac{1}{2}}
\ea
for some $\alpha(\cdot)$,
where $(\tilde u,\tilde z)$ is the solution of \eqref{rNS} with initial data
$(\tilde u_0,\tilde z_0).$ 
\epr

\begin{proof}
This was established in Theorem 1.12, \cite{TZ4},
for the case described in example \ref{idealeg}.
However, the proof relied only on (H0)--(H3) 
and the consequent \eqref{H4},
hence extends to the general case.
\end{proof}

Our main theorem is the following result linking Evans stability
of (rNS) profiles $(\bar u^\eps,\bar z^\eps)$
in the limit as $\eps\to 0$
with Evans--Lopatinski stability of the limiting (ZND)
profile $(\bar u^0,\bar z^0)$.


\bt\label{main}
Assuming (H0)--(H3), (P1)--(P3),
weak Evans stability of $(\bar u^\eps, \bar z^\eps)$ 
for all $\eps>0$ sufficiently small
implies weak Evans stability of
the viscous profile of the component Neumann shock 
together with weak Lopatinski stability of the limiting ZND 
detonation $(\bar u^0, \bar z^0)$, while strong Evans stability
of $(\bar u^\eps, \bar z^\eps)$ 
for all $\eps>0$ sufficiently small
is implied by strong Evans stability of
the viscous profile of the component Neumann shock 
together with strong Lopatinski stability of the limiting ZND 
detonation $(\bar u^0, \bar z^0)$.

More precisely, (i) For $\eps>0$ and $\eta>0$ sufficiently small, 
there are no zeros of $D_{rNS}$ 
on $\Re \lambda\ge -\eta$ for $|\lambda|\ge C/\eps$, $C$ sufficiently large.
(ii) For $C\le |\lambda| \le C/\eps$, $C$ sufficiently large,
on $\Re \lambda>-\eta$ for $\eta$, $\eps>0$ sufficiently small, 
$\eps$ times each zero of $D^\eps_{rNS}$ converges
to a zero of $\frac{D_{NS}(\lambda)}{\lambda}$ on $\Re \lambda \ge 0$;
moreover, each zero of $D_{NS}$ on $\Re \lambda>0$ is the limit of $\eps$
times a zero of $D_{rNS}$  on $\Re \lambda>0$, for $C\le |\lambda| \le C/\eps$.
(iii) For $|\lambda|\le C_0$, $C_0$ arbitrary, on $\Re \lambda \ge -\eta<0$,
the set of zeros of $D^\eps_{rNS}$ converges as $\eps\to 0$
to the set of zeros of $D_{ZND}$, for any sufficiently small
$\eta>0$ such that $D_{ZND}$ does not vanish for $\Re \lambda=-\eta$
and $|\lambda|\le C_0$.
\et

\begin{proof}
Assertions (i)--(iii) are established in 
Proposition \ref{pIII}, Corollary \ref{sII}, and
Corollary \ref{sI}, whence the remaining assertions follow
by definition of weak and strong stability of the various waves.
\end{proof}

\br\label{erprmk}
Assuming Evans stability of the associated viscous Neumann shock, 
we recover from (ii)--(iii), taking $C_0\to \infty$ and $\eps\to 0$,
the somewhat delicate result of \cite{Er1,Er2} 
that $D_{ZND}$ does not vanish on $\Re \lambda\ge 0$ 
for $|\lambda|$ sufficiently large.
\er

\br\label{JLWrmk}
Recall \cite{JLW} that, for fixed $\eps$, 
low-frequency Evans stability, defined as strong Evans
stability for $|\lambda|\le c_0$, some $c_0>0$, is
equivalent for either ZND or rNS detonations to the
simpler condition of ``Chapman--Jouget'' stability; 
see \cite{JLW} for further details.
Thus, low-frequency stability of ZND waves is necessary
for Evans stability of rNS detonations,
along with weak Evans--Lopatinski stability 
as stated in Theorem \ref{main}.
\er

\subsection{Discussion and open problems}

Together, Proposition \ref{stab} and Theorem \ref{main}
give a rigorous connection between Evans--Lopatinski stability
of (ZND) detonations and nonlinear stability of nearby (rNS) detonations
for $\eps>0$ sufficiently small, 
giving a satisfying mathematical validation 
of physical conclusions made through
the extensive ZND stability studies in detonation literature.
By contrast, to our knowledge there is no analog of Proposition \ref{stab}
for the (ZND) equations themselves, and indeed the physical meaning
of Evans--Lopatinski stability in that context in terms of
nonlinear stability or well-posedness is unclear; 
see \cite{JLW} for further discussion.

More, convergence of zeros of $D^\eps_{rNS}$ to those of $D_{ZND}$
implies that finer phenomena such as Hopf bifurcation are inherited
in the ZND limit as well as stability.
This is perhaps more important, as it is well-known that
ZND detonations are frequently unstable, bifurcating to pulsating
and cellular fronts.
See \cite{TZ2,TZ3,TZ4,SS,BeSZ} 
for a rigorous discussion of such bifurcations in the context of (rNS).

Existence, stability, and bifurcation 
of detonations away from the ZND limit are important open problems.
Such general situations appear to require numerical investigation,
as is standard in the combustion literature even for the simpler
(ZND) model; see, e.g., \cite{LS,HuZ2} and references therein.
Treatment of the ZND limit in multi-dimensions is another important
open problem.
In particular, the implications for nearby (rNS)
profiles of high-frequency ZND instabilities
pointed out in \cite{Er3} is an intriguing mathematical puzzle;
see \cite{JLW} for further discussion.

We note that one-dimensional stability in the ZND limit was previously
established in \cite{RV} for detonation profiles of Majda's model.
Our results both recover and illuminate this prior result,
since the associated Neumann shock profile, because it is  
scalar, is in this case always stable (see, e.g., \cite{Sa}), as is
the limiting ZND detonation.
Related singular perturbation results for systems of conservation laws
may be found in \cite{PZ,HLZ,CHNZ,HLyZ1,HLyZ2,BHZ}
and (for multi-wave patterns) in \cite{OZ1,Z3}.
In particular, we rely heavily on the basic methods of analysis
developed in \cite{PZ}, \cite{HLZ}, \cite{Z3}, and \cite{BHZ}.

\section{The Evans--Lopatinski determinant for (ZND)}\label{evansZND}

We begin by recalling the linearized stability
theory for ZND detonations following \cite{Er1,Z1,JLW,HuZ2}. 
Shifting to coordinates $\tilde x=x-st$ moving with the background
Neumann shock, write \eqref{ZND} as
\begin{equation}\label{abznd}
\begin{aligned}
W_t + F(W)_x=R(W),
\end{aligned}
\end{equation}
where
\begin{equation}\label{abcoefs}
\begin{aligned}
W:=\begin{pmatrix} u\\z \end{pmatrix}, \quad
F:=\begin{pmatrix} f(u)-su\\-sz \end{pmatrix},\quad
R:=\begin{pmatrix} qkz\phi(u)\\-kz\phi(u) \end{pmatrix}.\\
\end{aligned}
\end{equation}

To investigate solutions in the vicinity of a discontinuous 
detonation profile, we postulate existence of a single shock
discontinuity at location $X(t)$, and reduce to a fixed-boundary
problem by the change of variables $x\to x-X(t)$.
In the new coordinates, the problem becomes
\begin{equation}\label{transformed}
\begin{aligned}
W_t + (F(W) - X'(t) W)_x=R(W), \quad x\ne 0,
\end{aligned}
\end{equation}
with jump condition
\begin{equation}\label{transformedRH}
\begin{aligned}
X'(t)[W] - [F(W)]=0, 
\end{aligned}
\end{equation}
$[h(x,t)]:=h(0^+,t)-h(0^-,t)$ as usual denoting jump 
across the discontinuity at $x=0$.

\subsection{Linearization}\label{linearization}
In moving coordinates, $\bar W^0$ 
is a standing detonation, hence 
$(\bar W^0, \bar X)=(\bar W^0,0)$ is a steady solution of 
\eqref{transformed}--\eqref{transformedRH}.
Linearizing \eqref{transformed}--\eqref{transformedRH} about
$(\bar W^0,0)$, we obtain the {\it linearized equations} 
\begin{equation}\label{lin}
\begin{aligned}
(W_t - X'(t)(\bar W^0)'(x)) + (AW)_x =EW,
\end{aligned}
\end{equation}
\begin{equation}\label{linRH}
\begin{aligned}
X'(t)[\bar W^0] - [A W]=0, \quad x=0,
\end{aligned}
\end{equation}
where $A:= (\partial/\partial W)F$, $E:= (\partial/\partial W)R$. 

\medskip
\subsection{Reduction to homogeneous form}\label{homogeneous}

As pointed out in \cite{JLW}, it is convenient for the stability
analysis to eliminate the front from the interior equation \eqref{lin}.
Therefore, we reverse the original transformation to linear order
by the change of dependent variables
\begin{equation}\label{wsharp}
W\to W- X(t)(\bar W^0)'(x),
\end{equation}
motivated by the calculation 
$W(x-X(t),t))-W(x,t)\sim X(t)W_x(x,t)
\sim X(t) (\bar W^0)'(x)$,
approximating to linear order the original, nonlinear transformation.
Substituting \eqref{wsharp} in \eqref{lin}--\eqref{linRH}, and
noting that $x$-differentiation of 
the steady profile equation $F(\bar W^0)_x=R(\bar W^0)$ gives
$(A(\bar W^0)(\bar W^0)'(x))_x=E(\bar W^0)(\bar W^0)'(x)$,
we obtain modified, {\it homogeneous} interior equations
\begin{equation}\label{flin}
\begin{aligned}
W_t + (AW)_x =EW
\end{aligned}
\end{equation}
agreeing with those that would be obtained by a naive
calculation without consideration of the front,
together with the modified jump condition
\begin{equation}\label{flinRH}
\begin{aligned}
X'(t)[\bar W^0]-  [A \big(W+ X(t) (\bar W^0)'\big) ]=0 
\end{aligned}
\end{equation}
correctly accounting for front dynamics.

\medskip

\subsection{The stability determinant}
Seeking normal mode solutions $W(x,t)=e^{\lambda t}W(x)$,
$X(t)=e^{\lambda t}X$,
$W$ bounded, of the linearized equations \eqref{flin}--\eqref{flinRH},
we are led to the generalized eigenvalue equations
\be\label{eigW}
(AW)' = (-\lambda I   + E)W, \quad x\ne 0,
\ee
$$
 X(\lambda[\bar W^0]-[A (\bar W^0)']) - [A W]=0, 
$$
where ``$\prime$'' denotes $d/dx$. or, setting $Z:=AW$, to
\be\label{eig}
Z' = GZ, \quad x\ne 0,
\ee
\begin{equation}\label{eigRH}
\begin{aligned}
X(\lambda[\bar W^0]-[A (\bar W^0)']) - [Z]=0, 
\end{aligned}
\end{equation}
with
\begin{equation}\label{G0}
G:=(-\lambda I  + E)A^{-1},
\end{equation}
\medskip
where we are implicitly using the fact that $A$ is
invertible, by (P1) and $s>0$.

\begin{lemma}[\cite{Er1,Er2,JLW}] \label{lem:splitting} 
On $\R \lambda >0$, the limiting $(n+1)\times (n+1)$
coefficient matrices $G_\pm:= \lim_{z\to \pm \infty} G(z)$ 
have unstable subspaces of fixed rank: full rank $n+1$ for
$G_+$ and rank $n$ for $G_-$.
Moreover, these subspaces extend analytically to $\R \lambda \le -\eta< 0$.
\end{lemma}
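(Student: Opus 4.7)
The plan is to compute $G_\pm$ explicitly from the definitions and read off the spectrum and invariant subspaces directly. From $F = (f(u)-su,\,-sz)^T$ one obtains the block-diagonal Jacobian
\[
A \;=\; \begin{pmatrix} df(u)-sI & 0 \\ 0 & -s \end{pmatrix},
\]
which is invertible on the profile by (H1) and $s>0$; from $R = (qkz\phi(u),\,-kz\phi(u))^T$ one computes $E$. Evaluating at the asymptotic states and using $\phi(u_+) = \phi'(u_+) = 0$ together with $z_- = 0$ yields $E_+ = 0$ and $E_-$ with vanishing $u$-column, so that
\[
G_+ \;=\; -\lambda A_+^{-1}, \qquad
G_- \;=\; \begin{pmatrix} -\lambda M^{-1} & -qk\phi(u_-)/s \\ 0 & (\lambda + k\phi(u_-))/s \end{pmatrix},
\]
where $M := df(u_-) - sI$. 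Both matrices are block upper-triangular, so their spectra are read off immediately.

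Next I would identify the unstable dimensions using the Lax conditions \eqref{Lax}. For $G_+$ the eigenvalues are $-\lambda/(a_j^+ - s)$ for $j = 1,\ldots,n$ together with $\lambda/s$; since $a_j^+ < s$ and $s > 0$, each is a strictly positive real multiple of $\lambda$, so the unstable subspace on $\Re \lambda > 0$ is all of $\CC^{n+1}$. For $G_-$ the upper block contributes the eigenvalues $-\lambda/(a_j^- - s)$: by Lax, $a_j^- - s < 0$ for $j \le n-1$ and $a_n^- - s > 0$, giving $n-1$ positive multiples and one negative multiple of $\lambda$; the scalar block contributes $(\lambda + k\phi(u_-))/s$, whose real part is positive whenever $\Re \lambda > 0$ since $k,\,\phi(u_-),\,s > 0$ by (P1). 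Thus the unstable subspace has dimension exactly $(n-1) + 1 = n$.

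For the analytic extension, the eigenvalues are affine in $\lambda$, so eigenvalue crossings are isolated. For $G_+$ the unstable subspace is all of $\CC^{n+1}$, extending trivially. For $G_-$ the block-triangular structure supplies an explicit analytic basis of the desired invariant subspace: the $\lambda$-independent eigenvectors $(v_j,0)$ for $j = 1,\ldots,n-1$, where $v_j$ is the eigenvector of $M^{-1}$ corresponding to $1/(a_j^- - s)$, together with a vector $(v_0(\lambda),1)$ attached to the scalar eigenvalue $\mu := (\lambda + k\phi(u_-))/s$, given by
\[
v_0(\lambda) \;=\; \bigl(\mu\, I + \lambda M^{-1}\bigr)^{-1}\bigl(-qk\phi(u_-)/s\bigr).
\]
This is analytic wherever $\mu$ differs from every eigenvalue of $-\lambda M^{-1}$; at $\lambda = 0$ the former equals $k\phi(u_-)/s > 0$ while the latter are all $0$, so by continuity analyticity holds on a full neighborhood of $\lambda = 0$ and hence on $\Re \lambda \ge -\eta$ for $\eta > 0$ sufficiently small.

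The main subtlety is the apparent eigenvalue collision at $\lambda = 0$ inside the upper block of $G_-$, where all $n$ eigenvalues $-\lambda/(a_j^- - s)$ coincide at $0$ and cannot be separated from the stable $j = n$ eigenvalue by a closed contour; a direct Kato-type spectral-projection argument would therefore fail there. The remedy, which is the one genuinely nontrivial point, is to exploit the fact that the collision is purely algebraic: the eigenvectors $v_j$ in question are eigenvectors of the $\lambda$-independent matrix $M^{-1}$ and so depend analytically (in fact trivially) on $\lambda$ through the collision, so that the analytic continuation of the unstable subspace is unobstructed on $\Re \lambda > -\eta$.
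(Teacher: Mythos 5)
Your computation is correct and is precisely the ``straightforward calculation using upper-triangular form of $G_\pm$'' that the paper invokes without detail: $A$ is block diagonal, $E_+=0$ and the $u$-column of $E_-$ vanishes by \eqref{zcond}--\eqref{phicond}, and the counts $n+1$ and $n$ follow from the Lax conditions \eqref{Lax} together with $\Re\,(\lambda+k\phi(u_-))/s>0$. So the approach is the same as the paper's; the only point needing attention is your analyticity argument for $G_-$.

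There you correctly handle the collision of the upper-block eigenvalues at $\lambda=0$, but the final inference --- analyticity of $v_0(\lambda)=(\mu I+\lambda M^{-1})^{-1}(-qk\phi(u_-)/s)$ on a neighborhood of $\lambda=0$, ``hence on $\Re\lambda\ge-\eta$'' --- does not follow, because that region is unbounded and contains the whole right half-plane. Expanding $-qk\phi(u_-)/s=\sum_j\beta_jv_j$ in the eigenbasis of $M$ gives $v_0=\sum_j\beta_j(\mu-c_j\lambda)^{-1}v_j$ with $c_j=1/(s-a_j^-)$, and the resonance $\mu=c_j\lambda$ occurs at the isolated real point $\lambda_j^*=k\phi(u_-)(s-a_j^-)/a_j^-$, which lies in the \emph{open right half-plane} whenever $0<a_j^-<s$; this cannot be excluded in general, so your basis vector can have a pole at an interior point of the region. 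The lemma's conclusion survives because for $j\le n-1$ the colliding eigenvalue $c_j\lambda$ is itself unstable on $\Re\lambda>0$: the two-plane $\Span\{(v_0,1),(v_j,0)\}$ equals $\Span\{(v_j,0),\,(v_0-\beta_j(\mu-c_j\lambda)^{-1}v_j,\,1)\}$, whose second spanning vector is analytic through $\lambda_j^*$, so the full unstable subspace (equivalently, the range of the Riesz projection onto the right-half-plane spectrum) continues analytically; the genuinely delicate continuation is only across the imaginary axis near $\lambda=0$, where your $\lambda$-independence argument for the $v_j$ is exactly the right fix. You should state this repair explicitly, since the Evans--Lopatinski construction \eqref{evanseq} downstream requires an analytic \emph{basis}, not merely an analytic subspace, and the remaining resonances ($j=n$, or $j\le n-1$ with $a_j^-<0$) sit at fixed negative real points avoided by taking $\eta$ small.
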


\begin{proof}
Straightforward calculation using upper-triangular form
 of $G_\pm$ \cite{Er1,Er2,Z1,JLW}. 
\end{proof}

\begin{corollary}[\cite{Z1,JLW}] \label{cor:splitting} 
On $\R \lambda >0$, the only bounded solution of \eqref{eig}
for $x>0$ is the trivial solution
 $W\equiv 0$.  For $x<0$, the bounded solutions consist of
an $(n)$-dimensional
manifold $\Span \{Z_1^+, \dots, Z_{n}^+\}(\lambda,x)$
of exponentially decaying solutions, analytic in $\lambda$ and
tangent as $x\to -\infty$ to the subspace of exponentially decaying
solutions of the limiting, constant-coefficient equations
$Z'=G_-Z$; moreover, 
this manifold extends analytically to $\R \lambda \le -\eta< 0$.
\end{corollary}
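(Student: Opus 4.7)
The plan is to handle the two sides of the Neumann shock separately, exploiting the degenerate structure of the ZND profile on $x>0$ and using exponential convergence of the coefficient matrix on $x<0$.

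On $x>0$, the background profile is by construction constant: $\bar W^0(x)\equiv (u_+,1)$, whence $G(x)\equiv G_+$ on this half-line. Lemma \ref{lem:splitting} tells us that the unstable subspace of $G_+$ has full rank $n+1$, i.e., every eigenvalue of $G_+$ has strictly positive real part. Every nontrivial solution of the constant-coefficient system $Z'=G_+Z$ therefore grows exponentially as $x\to+\infty$, and $Z\equiv 0$ is the only bounded solution, as claimed.

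On $x<0$, integration of the scalar equation $z'=(-k/s)\phi(\bar u^0(z))z$ coming from \eqref{ZNDODE}, together with transversality in (P1), yields exponential convergence $\bar W^0(x)\to (u_-,0)$ as $x\to-\infty$, and hence exponential convergence $G(x,\lambda)\to G_-(\lambda)$. Lemma \ref{lem:splitting} provides, uniformly on $\Re\lambda\ge-\eta$, an analytic splitting of $G_-(\lambda)$ into its $n$-dimensional unstable and one-dimensional stable subspaces, with a uniform spectral gap between them. The standard conjugation lemma (in the form of \cite{MaZ3}, relying on exponential decay of the coefficient perturbation and exponential dichotomy of the limiting matrix) then produces a pointwise invertible, $\lambda$-analytic transformation $P(x,\lambda)$ with $P(x,\lambda)\to I$ exponentially as $x\to-\infty$, such that the change of variables $Z=P\tilde Z$ conjugates \eqref{eig} to the constant-coefficient limit $\tilde Z'=G_-\tilde Z$ on a half-line $x\le -M$. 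Choosing a basis of the analytically varying $n$-dimensional unstable subspace of $G_-(\lambda)$ for $\tilde Z$, pulling back by $P$, and continuing the resulting solutions from $x=-M$ up to $x=0^-$ by the analytic flow of \eqref{eig} yields the claimed analytic basis $Z_1^+(\lambda,x),\dots,Z_n^+(\lambda,x)$ of exponentially decaying solutions, tangent as $x\to-\infty$ to the decaying (i.e., unstable-of-$G_-$) subspace.

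The main obstacle is the analytic extension of the decaying manifold across $\Re\lambda=0$: a priori, an eigenvalue of $G_-(\lambda)$ could cross the imaginary axis, causing the dimension of the unstable subspace to jump and destroying the dichotomy. This is exactly what Lemma \ref{lem:splitting} rules out, guaranteeing that the unstable subspace of $G_-(\lambda)$ has constant rank $n$ and depends analytically on $\lambda$ throughout a strip $\Re\lambda\ge-\eta$. With the splitting and spectral gap preserved uniformly on this strip, the conjugator $P$ and the resulting solutions $Z_j^+$ inherit analytic dependence on $\lambda$ without singularity, completing the argument.
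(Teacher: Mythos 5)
Your proof is correct and follows essentially the same route as the paper's: the $x>0$ claim is immediate from constancy of $G$ there and the full-rank unstable subspace of $G_+$ from Lemma \ref{lem:splitting}, while the $x<0$ claim follows from exponential convergence of $G$ to $G_-$ plus the conjugation lemma (the paper invokes the version of \cite{MeZ1}, stated as Lemma \ref{conjlem} in the appendix) and the analytic extension of the unstable subspace of $G_-$ to $\Re\lambda\ge-\eta$ guaranteed by Lemma \ref{lem:splitting}.
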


\begin{proof}
The first observation is immediate, using the fact that $G$
is constant for $x>0$.
The second follows from standard asymptotic ODE theory, 
using the conjugation lemma of \cite{MeZ1} (see Lemma \ref{conjlem},
Appendix \ref{asymptotic})
together with the fact that $G$ decays exponentially
to its end state as $x\to -\infty$.
\end{proof}

\begin{definition}\label{lopdef}
We define the {\it Evans--Lopatinski determinant}
\begin{equation}\label{evanseq}
\begin{aligned}
D_{ZND}(\lambda)&:=
\det \begin{pmatrix}
Z_1^-(\lambda,0), & \cdots, & Z_{n}^-(\lambda,0), &
\lambda [\bar W^0]-[A (\bar W^0)']\\
\end{pmatrix}\\
&=
\det \begin{pmatrix}
Z_1^-(\lambda,0), & \cdots, & Z_{n}^-(\lambda,0), &
\lambda[\bar W^0]+A(\bar W^0)'(0^-)\\
\end{pmatrix},\\
\end{aligned}
\end{equation}
where $Z^-_j(\lambda,x)$ are as in Corollary \ref{cor:splitting}.
\end{definition}

The function $D_{ZND}$ is exactly the {\it stability function} 
derived in a different form by Erpenbeck \cite{Er1,Er2}.
The formulation \eqref{evanseq} is of the standard form arising
in the simpler context of (nonreactive) shock stability \cite{Er4}.
Evidently (by \eqref{eigRH} combined with Corollary \ref{cor:splitting}), 
$\lambda$ is a generalized eigenvalue/normal mode
for $\R \lambda \ge 0$ if and only if $D_{ZND}(\lambda)=0$.

\section{The Evans determinant for (rNS) }\label{evansrNS}

\subsection{Linearization}
Linearizing \eqref{rNS} about $(\bar u^\eps, \bar z^\eps)$ in moving
coordinates yields linearized eigenvalue equations
\ba\label{eigrNS}
\lambda W + (\tilde AW)_x =EW + (\eps\tilde B W_x)_x,
\ea
where $W=\bp u_1\\u_2\\z\ep=\bp W_1\\W_2\ep$, $W_2=\bp u_2\\z\ep$,
 $\bar W^\eps=\bp \bar u^\eps\\\bar z^\eps\ep$,
 $\tilde B =
\bp 0& 0\\0& \tilde b\ep =
\bp 0 & 0 & 0\\ 0 & b & 0\\ 0&0&C\ep (\bar W^\eps)$,
 and
\be\label{bara}
\tilde A v:= dF(\bar W^\eps)v- \eps(d \tilde B(\bar W^\eps) v)\bar W^\eps_x,
\qquad
(\tilde A_{11}, \tilde A_{12}) = (df_{11}-s,(df_{12},0))\equiv \const,
\ee
$E$, $F$ as in \eqref{flin}.

\subsection{Expression as a first-order system}
Setting  $\tilde x=x/\eps$ and
$$
\cW= \bp Y\\  W_2\ep:=
\bp \tilde AW-\eps \tilde B W_x \\ W_2\ep,
\qquad
Y=\bp Y_1\\Y_2\ep ,\quad
W_2:=\bp u_2\\z\ep,
$$
we may write \eqref{eigrNS} as
a first-order system 
\be\label{1linrNS}
\dot \cW=\cG^\eps(\lambda, x)\cW,
\ee
where
\be\label{coeff}
\cG^\eps:=
\begin{pmatrix}
\eps (E-\lambda)\tilde A_{11}^{-1} &0& 
-\eps(E-\lambda)\tilde A_{11}^{-1}\tilde A_{12}\\
0&0& \eps(E-\lambda)\\
\tilde b^{-1}\tilde A_{21}\tilde A_{11}^{-1}& -\tilde b^{-1}& 
\tilde b^{-1}(\tilde A_{22}-\tilde A_{21}\tilde A_{11}^{-1}\tilde A_{12})
\end{pmatrix}
\ee
and $\dot{ }$ denotes $d/d\tilde x$.
Here, we are using implicitly the facts that $\tilde A_{11}$ and $\tilde b$
are invertible, 
by \eqref{bara}, (P2), and (H2).

We have the following analogs of Lemma \ref{lem:splitting} and
Corollary \ref{cor:splitting}, which follow in essentially the same
way; see \cite{LRTZ,TZ4} for further details.

\begin{lemma}[\cite{Z1,TZ4}] \label{lem:splittingrNS} 
On $\R \lambda >0$, the limiting $(n+1+r)\times (n+1+r)$
coefficient matrices $\cG_\pm:= \lim_{z\to \pm \infty} \cG(z)$,
$r=\dim u_2 +1$, 
have stable subspaces of fixed, equal rank $r$.
Moreover, these subspaces extend analytically to $\R \lambda \le -\eta< 0$.
\end{lemma}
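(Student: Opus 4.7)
The plan is to identify the eigenvalues $\mu$ of $\cG^\eps_\pm$ with roots of the dispersion relation
\[
\det\bigl(\mu^2 \tilde B_\pm - \mu \tilde A_\pm + \eps(E_\pm - \lambda)\bigr) = 0,
\]
obtained by substituting $W = e^{\mu \tilde x}W_0$ into the constant-coefficient form of \eqref{eigrNS} in the rescaled variable $\tilde x = x/\eps$. The block structure $\tilde B_\pm = \mathrm{diag}(0, \tilde b_\pm)$ with $\tilde b_\pm$ elliptic by (H2) produces a two-scale separation of roots: $n+1$ ``slow'' roots clustered near $\mu \approx \eps\,\tilde A_\pm^{-1}(E_\pm - \lambda)$ and $r$ ``fast'' roots clustered near the nonzero eigenvalues of $\tilde b_\pm^{-1}\bigl(\tilde A_{22}^\pm - \tilde A_{21}^\pm (\tilde A_{11}^\pm)^{-1}\tilde A_{12}^\pm\bigr)$, all up to $O(\eps)$ corrections. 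These asymptotics follow from standard matrix perturbation theory applied to a pencil with one zero block, reducing via Schur complement to a first-order eigenvalue problem on $W_{2,0}$.

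To count stable and unstable roots on $\Re\lambda > 0$, I would homotope to the high-frequency limit $\Re\lambda \to +\infty$, which is permitted because (H3) keeps slow roots off the imaginary axis and (H2) keeps the $O(1)$ fast roots uniformly separated from it, making the stable/unstable dimensions constant. At high frequency the slow roots approximate $-\eps\lambda\,\tilde A_\pm^{-1}$; since $\tilde A = dF = \mathrm{diag}(df(u)-s,-s)$ at the end states, the extreme Lax condition \eqref{Lax} yields $n+1$ unstable slow roots at $+\infty$ and $n$ unstable plus $1$ stable slow root at $-\infty$, the stable one coming from $a_n^- - s > 0$. For the fast roots, the block-diagonal structure of $dF$ in $(u,z)$ makes the Schur complement block-diagonal in $(u_2,z)$: there is a $z$-mode $\mu \approx -s/C_\pm$, always stable by $s>0$ and $\Re \sigma C > 0$, together with $r-1$ $u_2$-modes governed by exactly the Schur matrix appearing in the nonreactive viscous Lax shock problem at the same end state. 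Invoking (P2)--(P3) to match these end-state matrices with those of the Neumann profile $\hat u$, the standard consistent-splitting theorem for hyperbolic--parabolic extreme Lax shocks \cite{MaZ3,Z1,Z2} supplies $r-1$ stable $u_2$-fast modes at $+\infty$ and $r-2$ at $-\infty$. Summing gives stable dimension $r$ on both sides.

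Analytic continuation of the stable subspaces to a strip $\Re \lambda \ge -\eta$ then follows by observing that the Riesz projection onto the stable eigenspace of $\cG^\eps_\pm$ is analytic in $\lambda$ wherever the spectral gap persists, and by continuity of the $\mu$-roots combined with (H3) and (H2) the gap survives in a small neighborhood of $\Re\lambda = 0$ (for each fixed sufficiently small $\eps$). The main obstacle I anticipate is the fast-mode count at $-\infty$, where one $u_2$ Schur-complement root must switch sign relative to $+\infty$; rather than verify this by direct matrix manipulation for the coupled rNS system, I would quote the established NS viscous shock theory for the Neumann end states via (P2)--(P3), so that the only genuinely new content is the trivial addition of the manifestly stable $z$-mode $-s/C_\pm$ and the unstable $z$-slow mode $\mu = \lambda/s + O(1)$.
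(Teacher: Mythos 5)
Your counting argument is essentially the standard one that the paper implicitly invokes (the paper itself gives no proof beyond ``follows in essentially the same way'' as the ZND case, citing \cite{Z1,TZ4,LRTZ}), and the count is correct: the dispersion relation $\det(\mu^2\tilde B_\pm-\mu\tilde A_\pm+\eps(E_\pm-\lambda))=0$ has degree $n+1+r$, the $n+1$ slow roots are governed by $\eps\tilde A_\pm^{-1}(E_\pm-\lambda)$ and sorted by the Lax conditions \eqref{Lax} (all unstable at $+\infty$; $n$ unstable, one stable at $-\infty$), the $r$ fast roots are the eigenvalues of $N_\pm=\tilde b^{-1}(\tilde A_{22}-\tilde A_{21}\tilde A_{11}^{-1}\tilde A_{12})_\pm$, whose $(u_2,z)$ block-diagonal structure at the end states isolates the manifestly stable $z$-root $-s/C_\pm$ from the nonreactive NS Schur block, and \eqref{H4} excludes pure imaginary spatial roots on $\Re\lambda>0$ so the dimensions are constant there. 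Two cosmetic remarks: the left end state is $u_-$, not the Neumann state $u_*$, so transporting the signature of $N$ from the NS theory at $(u_*,u_+)$ to $(u_-,u_+)$ uses (P1) and (H1) (constancy of the signature of $df-s$ along the connected ZND branch); and your ``homotope to $\Re\lambda\to+\infty$'' is inconsistent with the slow/fast root asymptotics you then use, which are valid for $|\eps\lambda|$ moderate -- consistent splitting lets you evaluate at any convenient point of $\Re\lambda>0$, so just do the count there.

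The genuine gap is in the last step, the analytic extension to $\Re\lambda\ge-\eta$. Your mechanism -- analyticity of the Riesz projection because ``the gap survives in a small neighborhood of $\Re\lambda=0$'' -- fails precisely at $\lambda=0$ for $\cG_-$: the stable slow root $\mu\approx-\eps\lambda/(a_n^--s)$ merges with the unstable slow roots at $\lambda=0$ (indeed $\mu=0$ is an eigenvalue of $\cG_-(0)$ of multiplicity $n$ and of $\cG_+(0)$ of multiplicity $n+1$, since $E_+=0$ and $E_-$ has rank one), so there is no stable/unstable spectral gap there and the stable subspace does not continue as a Riesz spectral subspace. The correct argument, as in \cite{MaZ3,Z1}, is finer: the slow group as a whole remains spectrally separated from the fast group, so its total eigenprojection is analytic; within the slow group the coefficient matrix is $\lambda$ times a matrix whose value at $\lambda=0$ has distinct eigenvalues $-\eps/(a_j^\pm-s)$, $\eps/s$ by (H1), so the \emph{individual} eigenvalues and eigenprojections are analytic through $\lambda=0$, and the ``extended stable subspace'' is by definition the span of these analytically continued eigenvectors (it simply ceases to be the stable subspace once roots cross the axis). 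At $+\infty$ your gap argument happens to survive, since there the stable subspace is purely fast and hence uniformly separated near $\lambda=0$; at $-\infty$ it does not, and the eigenvalue-perturbation argument is genuinely needed.
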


\begin{corollary}[\cite{Z1,TZ4}] \label{cor:splittingrNS} 
On $\R \lambda >0$, the bounded solutions of \eqref{eigrNS}
on $x\le 0$ consist of an $(n+1)$-dimensional manifold 
$\Span \{\cW_1^-, \dots, \cW_{n+1}^-\}(\lambda,x)$
of exponentially decaying solutions in the backward $x$ direction, 
analytic in $\lambda$ and
tangent as $x\to -\infty$ to the subspace of exponentially decaying
solutions in the backward $x$ direction
 of the limiting, constant-coefficient equations $\cW'=\cG_-\cW$;
the bounded solutions of \eqref{eigrNS}
on $x\ge 0$ consist of an $(r-1)$-dimensional manifold 
$\Span \{\cW_{n+2}^+, \dots, \cW_{n+1+r}^+\}(\lambda,x)$
of exponentially decaying solutions, 
analytic in $\lambda$ and
tangent as $x\to +\infty$ to the subspace of exponentially decaying
solutions of the limiting, constant-coefficient equations $\cW'=\cG_+\cW$;
Moreover, these manifolds extend analytically to $\R \lambda \le -\eta< 0$.
\end{corollary}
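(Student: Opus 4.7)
The plan is to derive Corollary \ref{cor:splittingrNS} from Lemma \ref{lem:splittingrNS} by the same asymptotic--ODE strategy as for Corollary \ref{cor:splitting}: combine the analytic splitting of $\cG^\eps_\pm$ with the conjugation lemma (Lemma \ref{conjlem}) and smooth parameter dependence of the linear flow.

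First I would verify, for each fixed $\eps > 0$, that the matrix $\cG^\eps(\lambda, x)$ in \eqref{coeff} converges exponentially in $x$ to its end-state limits $\cG^\eps_\pm(\lambda)$. Since its entries are smooth in $(\bar u^\eps, \bar z^\eps)$ and $\lambda$, this reduces to exponential convergence of the profile to its end states: \eqref{zndapprox} gives rate $\theta > 0$ as $x \to -\infty$, and \eqref{neumannapprox+} gives the fast rate $\theta/\eps$ as $x \to +\infty$. Either is an acceptable input for Lemma \ref{conjlem}, which then produces an invertible, analytic-in-$\lambda$ conjugator $P^\pm(\lambda, x)$ with $P^\pm \to I$ as $x \to \pm \infty$, transforming \eqref{1linrNS} into its constant-coefficient limit $\tilde\cW' = \cG^\eps_\pm \tilde\cW$ for $|x|$ sufficiently large.

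The bounded solutions of the limiting constant-coefficient systems on $x \le 0$ and $x \ge 0$ are exactly the unstable subspace of $\cG^\eps_-$, of dimension $(n+1+r)-r = n+1$ by Lemma \ref{lem:splittingrNS}, and the stable subspace of $\cG^\eps_+$, of dimension $r$, respectively. Pulling these subspaces back through $P^\pm$ and propagating to $x = 0$ by the linear ODE \eqref{1linrNS} produces analytic families $\{\cW_j^-\}_{j=1}^{n+1}$ and $\{\cW_j^+\}_{j=n+2}^{n+1+r}$ with the stated tangency at $\pm \infty$ and exponential decay in the corresponding direction.

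The one genuinely delicate point, and the main obstacle to expect, is the analytic extension of these frames from $\R \lambda > 0$ to $\R \lambda \ge -\eta$; this reduces to analytic extension of the stable/unstable subspaces of $\cG^\eps_\pm$ across the imaginary $\lambda$-axis, which holds provided no eigenvalue of $\cG^\eps_\pm$ crosses the imaginary axis of the $\mu$-plane as $\lambda$ crosses its own. That this is the case for sufficiently small $\eta > 0$ is precisely the ``moreover'' clause of Lemma \ref{lem:splittingrNS}, so no new work is required here. All remaining verifications---analyticity of $\cW_j^\pm$ in $\lambda$ and tangency as $x \to \pm \infty$---are automatic consequences of the regularity of $P^\pm$ and the splitting already in hand, exactly as in the detailed treatment in \cite{LRTZ, TZ4}.
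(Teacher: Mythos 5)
Your proposal is correct and follows exactly the route the paper has in mind: the paper states only that Corollary \ref{cor:splittingrNS} ``follows in essentially the same way'' as Corollary \ref{cor:splitting}, i.e.\ by applying the conjugation lemma (Lemma \ref{conjlem}) to $\cG^\eps$ using exponential convergence of the profile from (P3), then pulling back the splitting of $\cG^\eps_\pm$ furnished by Lemma \ref{lem:splittingrNS}, including its analytic extension to $\Re\lambda\ge-\eta$. One small remark worth recording: your count of the decaying manifold at $+\infty$ as $r$-dimensional (the dimension of the stable subspace of $\cG^\eps_+$, matching the $r$-element spanning set $\{\cW_{n+2}^+,\dots,\cW_{n+1+r}^+\}$) is the correct one; the ``$(r-1)$-dimensional'' appearing in the corollary statement is an internal inconsistency (typo) in the paper, as one sees also by counting columns in the $(n+1+r)\times(n+1+r)$ Evans determinant \eqref{evanseqrNS}.
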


\subsection{The stability determinant}

\begin{definition}\label{evansdef}
We define the {\it Evans function}
\begin{equation}\label{evanseqrNS}
D^\eps_{rNS}(\lambda):=
\det \begin{pmatrix}
\cW_1^-, & \cdots, & \cW_{n+1}^- , &
\cW_{n+2}^+, & \cdots, & \cW_{n+1+r}^+ \end{pmatrix} (\lambda,0),
\end{equation}
where $\cW^\pm_j(\lambda,x)$ are as in Corollary \ref{cor:splittingrNS}.
\end{definition}

\section{Fast vs. slow coordinates}

Note that the coordinate transformation to stretched, or
``fast'' variables
\be\label{fast}
(\tilde x,\tilde t,\tilde \lambda)=
(x/\eps,t/\eps,\eps \lambda),
\ee
changes equations \eqref{rNS} to
\ba \label{fastrNS}
u_{\tilde t} + f(u)_{\tilde x} &=  (B(u)u_{{\tilde x}})_{\tilde x} +\eps kq\varphi(u)z,\\
z_{\tilde t} &=  (C(u,z)z_{\tilde x})_{\tilde x} -\eps k\varphi(u)z,
\ea
shifting the small parameter $\eps$ from diffusion to reaction terms.
The computations to follow may be thought of as
alternating between (original) slow variables and fast variables
in our analysis, as convenient for different regions in $\lambda$ and $x$.
In particular, first-order equations \eqref{1linrNS}--\eqref{coeff}
may be recognized as the first-order linearization of fast equations 
\eqref{fastrNS}.

\section{Region I: $|\lambda|\le C$}\label{I}

We first study the critical ``ZND'' region
$ |\lambda|\le C$, or $\eps/C\le|\tilde \lambda|\le C\eps$,
where behavior of the (rNS) Evans function $D_{rNS}$ is governed by
that of the Evans--Lopatinski determinant $D_{ZND}$.
Setting $M>>1$ to be a large constant to be determined later,
we study separately the zones $\tilde x=x/\eps \le -M$
and $\tilde x=x/\eps \ge -M$, on which the profile $\bar W^\eps$
is dominated respectively by the (ZND) profile $\bar W^0$ and
the viscous shock profile $\hat W$, as described in (P3).

\subsection{``Slow'', or ``reaction'' zone, $\tilde x\le -M$}\label{slowzone}
Note that $\tilde A$, hence 
$N:=\tilde b^{-1}(\tilde A_{22}- \tilde A_{21}\tilde A_{11}^{-1}\tilde A_{12})$
is invertible for $x\le -\delta$, by \eqref{bara} and (P1)--(P2).
Setting $\cW=T\cZ$, where
$T:=\bp I & 0 \\ -N^{-1}\ell& I\ep$ and 
$\ell:= (\tilde b^{-1}\tilde A_{21}\tilde A_{11}^{-1}, -\tilde b^{-1})$, 
$$
-N^{-1}\ell=
-(\tilde A_{22}- \tilde A_{21}\tilde A_{11}^{-1}\tilde A_{12})^{-1}
(\tilde A_{21}\tilde A_{11}^{-1}, -I), 
$$
and noting that $\dot N,\, \dot \ell \sim \dot{ \bar W}^\eps$ 
on $\tilde x\le -M$, by (P3), we transform 
\eqref{1linrNS}--\eqref{coeff} to
\be\label{cZeq}
\dot\cZ= \cH^\eps \cZ,
\ee 
\be\label{H}
\cH^\eps= T^{-1}\cG^\eps T -T^{-1}\dot T=
\begin{pmatrix} \eps (E-\lambda)\tilde A^{-1} &\eps m\\ 0&N+O(\eps)\\ \end{pmatrix}
+
\bp 0 & 0\\ O(\eps + |\dot{\bar W}^\eps|) & 0\ep,
\ee
where $m:= (E-\lambda)\bp -\tilde A_{11}^{-1}\tilde A_{12}\\ I \ep$,
and terms $O(\cdot)$ are smooth and analytic in $\lambda$.
Here, $N$, by \eqref{Lax} and the relation between viscous and inviscid
shock structure \cite{MaZ3}, has one positive eigenvalue and $r-1$ negative 
eigenvalues, where $r=\dim u_2+1$.

By a further transformation $\cZ=S\cV$,
$S=\bp I & -\eps N^{-1}m\\ 0 & I\ep$, we transform to
$\dot \cV= \cK^\eps \cV$, where
\be\label{cK}
\cK^\eps = S^{-1}\cH^\eps S -S^{-1}\dot S=
\begin{pmatrix} \eps (E-\lambda)\tilde A^{-1} &0\\ 
0&N\\ \end{pmatrix}
+
\bp  O(\eps^2 + \eps|\dot{\bar W}^\eps|)& O(\eps^2 + \eps|\dot{\bar W}^\eps|)\\
\\ O(\eps + |\dot{\bar W}^\eps|) & O(\eps + |\dot{\bar W}^\eps|)\ep.
\ee
By a further transformation $\bp I & 0 \\ 0 & r\ep$ 
if necessary, we may take without loss of 
generality $N$ block-diagonal, $N=\bp N_1&0\\0&N_2\ep$, with
$\Re N_1\ge \eta>0$ and $\Re N_2\le -\eta <0$, both with a uniform
spectral gap from block $\eps(E-\lambda)\tilde A^{-1}\sim \eps$.

Applying the asymptotic ODE results of Lemma \ref{reduction} with
$\eta\sim 1$, $\delta=O(\eps + |\dot{\bar W}^\eps|)$,
and $|\Theta|\le C$, and of Remark \ref{tritrack}.1 with
$\eta\sim 1$,
$\delta:=\eps(\eps+ |\dot{\bar W}^\eps|)$, $|\Theta_{12}|+|\Theta_{11}|\le C$,
$|\Theta_{21}|+|\Theta_{22}|\le C/\eps$, 
we find that there is a change of coordinates
$\cZ=S \cU$, $\cU=\bp \cU_1\\\bp \cU_2\\\cU_3\ep\ep$, defined on 
$\tilde x\le -M$, 
\be\label{Sinv}
S^{-1}=\bp I & -\Phi_1\\ -\Phi_2 & I\ep=
\bp I & \eps O(\eps + |\dot{\bar W}^\eps|)\\ 
O(\eps + |\dot{\bar W}^\eps|) & I\ep
=
\bp I& o(\eps)& o(\eps)\\
 o(1)& I& o(1)\\
 o(1)&  o(1)& I\ep
\ee
close to the identity
converting \eqref{cZeq} into three decoupled equations
\ba\label{decoup}
\dot \cZ_1&=\eps(E-\lambda)\tilde A^{-1}\cZ_1 + 
O(\eps^2+\eps |\dot {\bar W}^\eps|)\cZ_1;
\qquad \cU_2\equiv 0, \, \cU_3\equiv 0, \\
\dot \cZ_2&= (N_1+o(1))\cZ_2; 
\qquad \cU_1\equiv 0, \, \cU_3\equiv 0, \\
\dot \cZ_3&= (N_2+o(1))\cZ_3 ;
\qquad \cU_1\equiv 0, \, \cU_2\equiv 0. \\
\ea

Focusing on the ``slow'' $\cZ_1$ mode,
changing coordinates from $\tilde x$ back to $x=\eps \tilde x$, we obtain
\be\label{Z1eq}
\cZ_1'=(E-\lambda)\tilde A^{-1}\cZ_1 + O(\eps +|\dot{\bar W}^\eps(x/\eps)|)\cZ_1, 
\ee
where ``$'$'' denotes $d/dx$, and all coefficients converge uniformly
as $Ce^{-\eta |x|}$, $\eta>0$ to their limits as $x\to -\infty$.
Applying the convergence lemma, Lemma \ref{evanslimit}
(Appendix \ref{s:conv}),
for $x\le -M$,
together with Remark \ref{varconj}, \eqref{altconv2},
we find that there is a further coordinate change $\cZ_1=PZ$, 
$P=I+O( \eps+ e^{-\eta M})=I+o(1)$ for $|x|\le 2M$, 
taking the decaying/growing modes of \eqref{Z1eq} 
to those of
$$
Z'=(E-\lambda)\tilde A^{-1}Z ,
$$
which may be recognized as exactly
the interior equation \eqref{eig},\eqref{G0} associated with
the Evans--Lopatinski development.

Tracing back through our coordinate transformations, we find
that the $n$ slowly-decaying modes $\cW_1^-, \dots \cW_{n}^-$
as $\tilde x\to -\infty$ are given at $\tilde x=-M$ by
\be\label{slowexp}
\cW_j^-=(I+o(1))\bp Z_j^- \\ * \ep , \quad j=1, \dots, n,
\ee
where $Z_j^-$ are as in \eqref{evanseq}, while the single fast-decaying
mode as $\tilde x\to - \infty$ is given at $\tilde x=-M$ by
\be\label{fastexp}
\cW_{n+1}^-= c(- M)
\bp o(1)\eps v \\ (I+o(1))v 
\ep, 
\ee
where $c(\tilde x)$ is exponentially decaying as $\tilde x\to -\infty$
and $v$ lies in the unique stable eigendirection of $N(-M)$,
$N:=\tilde b^{-1}(\tilde A_{22}- \tilde A_{21}\tilde A_{11}^{-1}\tilde A_{12})$.
Here, $o(1)$ depends on $M$, going to zero as $M\to \infty$
and $\eps\to 0$ at the same time.

\br\label{Nrmk}
At $\eps=0$, 
the single fast-decaying mode reduces to the translational
zero eigenfunction of the associated viscous Neumann shock (necessarily
fast-decaying), and so we may refine \eqref{fastexp} to
\be\label{furtherfastexp}
\cW_{n+1}^-= 
\bp o(1)\eps \dot{\bar W}_2^0 \\ (I+o(1))\dot{\bar W}_2^0 \ep, 
\qquad \dot{\bar W}_2^0=\bp \dot{ \hat u}_2\\ \dot{\hat z}\ep.
\ee
\er

\subsection{``Fast'', or ``Neumann shock'' zone, $\tilde x\ge -M$}\label{fastzone}

Applying the convergence lemma, Lemma \ref{evanslimit}, for $\tilde x\ge -M$,
using the asymptotics of (P3), together with Remark \ref{varconj},
\eqref{altconv2},
we find that there is a change of
coordinates $\hat P^\eps_+$ with  $\hat P^\eps_+=I+ O(\eps )$ for
$|\tilde x|\le M$, such that
$\cW:=\hat P^\eps_+\cV$ converts \eqref{1linrNS}--\eqref{coeff} to the
same equations with $\eps\equiv 0$, i.e., the shock eigenvalue
system at $\lambda=0$, hence, by inspection, the decaying modes
$\cV_{n+2}^+, \dots, \cV_{n+1+r}^+$ at $+\infty$, necessarily fast-decaying,
by the Lax characteristic assumption \eqref{Lax} (see \cite{MaZ3}), 
are by inspection (see \cite{Z1} for similar calculations)
of form 
\be\label{+decayV}
\cV_j^+= \bp 0 \\  v_j(x)\ep,
\ee
$v_j$ independent, hence for $|x|\le M$
\be\label{+decayW}
\cW_j^+= (I+O(\eps ))
\bp 0 \\ v_j\ep,
\qquad j=n+2, n+1+r .
\ee
Here the constant $O(\cdot)$ depends on (growing exponentially with) 
the fixed constant $M$,
since we are shifting $\tilde x \to \tilde x+M$ in order to apply the 
convergence lemma.
Evaluating at at $\tilde x=-M$, we have
\be\label{further+decayW}
\cW_j^+(-M)= (I+ O(\eps))
\bp 0 \\ v_j(-M)\ep,
\qquad j=n+2,\dots, n+1+r .
\ee

\subsubsection{Variation in $\eps$}\label{var}
At this point, gathering information, and noting, by Abel's formula,
that the Wronskian $D_{rNS}(\lambda)$ evaluated at $\tilde x=0$
is equal to a nonzero constant times the same Wronskian evaluated
at the point $\tilde x=-M$ at which we have information about
solutions from both sides, we have that $D_{rNS}(\lambda)$ is
proportional by a nonzero constant, analytic in $\lambda$, $\eps$, to
\ba\label{basicdet}
\det \Big( 
(I+o(1))\bp Z_1^- \\ *\ep,   &\dots , (I+o(1))\bp Z_n^- \\ *\ep,
(I+o(1)(\eps)\bp 0 \\v_{n+1}\ep , \\
&\qquad \qquad
(I+O(\eps))\bp 0 \\v_{n+2}\ep , \dots, 
(I+O(\eps))\bp 0 \\v_{n+1+r}\ep \Big)\\
&\qquad = O(\eps),
\ea
since up to $O(\eps)$ there are only $n$ nonzero entries $Z_j^-$ in the
$(n+1)$-dimensional $Y$ block.
Here, the constants $O(\eps)$ and $o(1)$ by construction are analytic in
$\lambda$, $\eps$ as well.

This reflects the fact that at $\eps=0$ there is a 
solution $W=\partial_{\tilde x}
\hat W=\bp \partial_{\tilde x}{\hat u}\\0\ep$ decaying at both
$\pm \infty$ of \eqref{eigrNS}, corresponding to the translational
eigenmode of the linearized equations about the associated viscous
Neumann shock; see Remark \ref{Nrmk}.
To extract the next-order behavior, we compute the
first variation of this special mode with respect to $\eps$ 
at $\eps=0$,
by an argument similar to those used in
\cite{GZ,ZS,LyZ1,LyZ2} to compute the first variation with
respect to $\lambda$ at $\lambda=0$.

Specifically, recalling that $v_{n+1},\dots, v_{n+1+r}$ are a basis in $\cC^r$,
we may perform a column operation using the final $r$ columns 
to cancel the entry $v_{n+1}$ in the $(n+2)$nd column
in determinant \eqref{basicdet}, to obtain
\ba\label{2det}
D^\eps_{rNS}(\lambda)&=
\psi(\lambda,\eps)
(I+o(1))\eps \det \bp v_{n+2} & \dots & v_{n+1+r}\ep
\det \bp Z_1^- &\dots &  Z_n^- & -Y_\eps \ep \\
\ea
hence 
\be \label{detconv}
\frac{D^\eps_{rNS}}{\eps\Psi(\lambda, \eps)}
= \det \bp Z_1^- &\dots &  Z_n^- & -Y_\eps \ep  + o(1),
\ee
as $\eps\to 0$, $o(1)\to 0$ as $M\to \infty$, 
where $\psi$ and $\Psi:=\psi \det \bp v_{n+2} & \dots & v_{n+1+r}\ep$ 
are nonvanishing factors analytic in $\lambda$, $\eps$, and 
\be\label{Y}
Y_\eps:=\partial_\eps  Y^*|_{\eps=0}=
\partial_\eps  \cW^*_1|_{\eps=0},
\ee
where $\cW^*(\eps,\lambda, x)$ is the (necessarily fast-) decaying
solution of \eqref{eigrNS} at $x\to +\infty$ defined by
\be\label{keydef}
\cW=\hat P_+^\eps(\lambda, x) \bp 0\\v_{n+1}\ep,
\ee
where $\hat P_+^\eps (\lambda, x)$ is the conjugating transformation
described above, and
$$
W^*|_{\eps=0}=\partial_{\tilde x} \hat W =\bp \partial_{\tilde x}{\hat u}\\0\ep.
$$

Writing \eqref{eigrNS} in $\tilde x$ coordinates as
\be\label{tildeeigrNS}
\eps(E- \lambda ) W 
=(\tilde A^\eps W -\tilde B^\eps  W_{\tilde x})_{\tilde x}
= \dot Y,
\ee
and differentiating \eqref{eigrNS} with respect to $\eps$, 
we obtain the variational equations
$$
(E- \lambda ) W^*|_{\eps=0} 
-(\partial_\eps\tilde A^\eps W^* 
-\partial_\eps \tilde B^\eps  W^*_{\tilde x})_{\tilde x}|_{\eps=0}
= \dot Y_\eps,
$$
or
\be\label{vareq}
E \hat W_{\tilde x}
-(\lambda \hat W+ \partial_\eps\tilde A^\eps\hat W_{\tilde x}  
-\partial_\eps \tilde B^\eps  \hat W_{\tilde x \tilde x})_{\tilde x}
= \dot Y_\eps.
\ee

Finally, recall that differentiating the traveling-wave ODE
$$
 (\tilde A\bar W^\eps)_{\tilde x} =\eps E\bar W^\eps 
+ (\tilde B \bar W^\eps_{\tilde x})_{\tilde x}
$$
with respect to $\tilde x$ yields
\be\label{trav}
E\bar W^\eps_{\tilde x} =
 \eps^{-1}(\tilde A\bar W^\eps_{\tilde x} - 
\tilde B \bar W^\eps_{\tilde x \tilde x})_{\tilde x}=
 (\tilde A\bar W^\eps_{x} - \eps \tilde B \bar W^\eps_{ xx})_{\tilde x}.
\ee
Together with the estimates 
\be\label{est1}
|\bar W^\eps_{\tilde x}- \hat W_{\tilde x}|\le C\eps e^{-\eta |\tilde x|},
\ee
$\eta>0$, for $x\ge -M$ and
\be\label{est2}
|\hat W_{\tilde x}(-M)|, \; |\hat W_{\tilde x \tilde x}(-M) |=o(1)
\ee
coming from asymptotics (P2), (P3), 
we find, combining \eqref{vareq}, \eqref{trav}, \eqref{est1}, and \eqref{est2}
that
\ba\label{keyrelation}
 (Y_\eps)_{\tilde x}&=
E (\hat W_{\tilde x}-\bar W^\eps_{\tilde x})+
 (\tilde A\bar W^\eps_{x} - \eps \tilde B \bar W^\eps_{ xx})_{\tilde x}
-(\lambda \hat W+ \partial_\eps\tilde A^\eps\hat W_{\tilde x}  
-\partial_\eps \tilde B^\eps  \hat W_{\tilde x \tilde x})_{\tilde x}\\
&=
 (\tilde A\bar W^\eps_{x} -\lambda \hat W 
- \eps \tilde B \bar W^\eps_{ xx}
 - \partial_\eps\tilde A^\eps\hat W_{\tilde x}  
+\partial_\eps \tilde B^\eps  \hat W_{\tilde x \tilde x})_{\tilde x}
+ O(\eps e^{-\eta |\tilde x|}).
\ea
Integrating \eqref{keyrelation} in $\tilde x$ from $\tilde x=-M$
to $\tilde x=+\infty$ thus yields
\be\label{intcomp}
 -Y_\eps(-M)=
 (-\tilde A\bar W^\eps_{x} +\lambda \hat W 
+ \eps \tilde B \bar W^\eps_{ xx}
 + \partial_\eps\tilde A^\eps\hat W_{\tilde x}  
-\partial_\eps \tilde B^\eps  \hat W_{\tilde x \tilde x})|_{\tilde x=-M}^{\tilde x=+\infty}
+ O(\eps),
\ee

From asymptotics (P3), we find that
\be\label{keyas}
 \tilde A\bar W^\eps_{x} - \eps \tilde B \bar W^\eps_{ xx}
=
 \tilde A\bar W0_{x} 
 +(\tilde A\hat W_{x} - \eps \tilde B \hat W_{ xx})
+o(1).
\ee
Recalling (by differentiation in $x$ of the traveling-wave ODE
for the viscous Neumann shock profile) that
$$
 \tilde A \hat W_{x} - \eps \tilde B \hat W_{ xx}\equiv 0,
$$
we find, substituting \eqref{keyas} into \eqref{intcomp}
and  discarding lower-order terms and vanishing boundary terms at $+\infty$,
\be\label{intresult}
 -Y_\eps(-M)=
\lambda [ \bar W^0] -[ A(\bar W^0)'(0)] + o(1),
\ee
where $\bar W^0$ as in (P1) denotes the associated ZND profile
and $[\cdot]$ the jump in values across its Neumann shock discontinuity.

\subsection{Convergence to $D_{ZND}$}

\bpr\label{pI}
Assuming (H0)--(H3), (P1)--(P3),
for $|\lambda|\le C$ and $\Re \lambda \ge -\eta$, $\eta>0$
sufficiently small, 
$\frac{D^\eps_{rNS}(\lambda)}{\eps\tilde \Psi(\lambda, \eps)}$
converges uniformly as $\eps \to 0$ to $D_{ZND}(\lambda)$,
where $\tilde \Psi(\cdot,\cdot)$ is 
a nonvanishing factor that is analytic in $\lambda$.
\epr

\begin{proof}
Choosing monotone sequences $M_j$ (increasing) and $\eps_j$ (decreasing)
such that $o(1)\le 1/j$ in \eqref{intresult} and \eqref{detconv},
define $\tilde \Psi(\lambda,\eps)$ to be equal to the function
 $\Psi(\lambda,\eps)$ in \eqref{detconv} that is associated
with $M_j$, where $j$ is the maximum integer such that
$\eps \le \eps_j$.
Then, $\tilde \Psi$ is analytic in $\lambda$ by construction, and,
combining \eqref{detconv}, \eqref{intresult}, and the definition of
$\eps_j$, $M_j$, we have
$$
\Big|
\frac{D^\eps_{rNS}(\lambda)}{\eps\tilde \Psi(\lambda, \eps)}-D_{ZND}(\lambda)
\Big| \le C/j \to 0
\quad \hbox{\rm as $\eps \to 0$.}
$$
\end{proof}

\bc\label{sI}
Assuming (H0)--(H3), (P1)--(P3),
for $|\lambda|\le C$ and $\Re \lambda \ge -\eta$, $\eta>0$,
the set of zeros of $D^\eps_{rNS}$ converges as $\eps\to 0$
to the set of zeros of $D^\eps_{rNS}$, for any sufficiently
small $\eta>0$ such
that $D_{ZND}$ does not vanish for $\Re \lambda=-\eta$ and $|\lambda|\le C$.
\ec

\begin{proof}
Noting that zeros of $D^\eps_{rNS}$ agree with zeros of
$\frac{D^\eps_{rNS}(\lambda)}{\eps\tilde \Psi(\lambda, \eps)}$,
we obtain the result by properties of uniform limits of analytic functions.
\end{proof}

\section{Region II: $C/\eps\ge |\lambda|\ge C>>1$}\label{II}

We next consider the ``Neumann shock'' region 
$C\eps \le \tilde \lambda \le C$, $C>>1$,
in which behavior of $D_{rNS}$ is dominated by that of the Evans
function $D_{NS}$ of the associated viscous Neumann shock.
Here, $D_{NS}$ is defined similarly as in Definition 
\ref{evansdef} of $D_{rNS}$ as
\begin{equation}\label{evanseqN}
D^\eps_{N}(\tilde \lambda):=
\det \begin{pmatrix}
\hat \cW_1^-, & \cdots, & \hat \cW_{n+1}^- , &
\hat \cW_{n+2}^+, & \cdots, & \hat \cW_{n+1+r}^+ \end{pmatrix} 
(\tilde \lambda,0),
\end{equation}
where $\hat \cW_j^\pm$ are decaying modes at $\pm \infty$ of 
the linearized eigenvalue equation
\ba\label{eigN}
\tilde \lambda W + (\hat AW)_{\tilde x} = (\hat B W_{\tilde x})_{\tilde x}
\ea
about $\hat W$, written as a first order system 
\be\label{sfirst}
\dot{\hat \cW}=\hat \cG(\tilde \lambda, \tilde x) \hat {\cW}, 
\qquad
\hat \cG:=
\begin{pmatrix}
-\tilde \lambda\hat A_{11}^{-1} &0& 
-\tilde \lambda\hat A_{11}^{-1}\tilde A_{12}\\
0&0& -\tilde \lambda\\
\hat b^{-1}\hat A_{21}\hat A_{11}^{-1}& -\hat b^{-1}& 
\hat b^{-1}(\hat A_{22}-\hat A_{21}\hat A_{11}^{-1}\hat A_{12})
\end{pmatrix},
\ee
with $\hat \cW=\bp \hat Y\\\hat W\ep=
\bp \hat A \hat W- \hat B \dot{\hat W}\\\hat W\ep $, 
$\hat B:=\tilde B(\hat W)$,
and
$\hat A v:= dF(\hat W)v- (d \tilde B(\hat W) v)\bar W_{\tilde x}$,
where $F$ is as in \eqref{flin}.  
For further details, see, e.g., \cite{Z1,Z2}.

\subsection{Fast zone $\tilde x\ge -M$}

Noting that $\tilde \lambda=\eps \lambda$ is bounded
by assumption, we have by \eqref{coeff} and (P3) 
\ba\label{coeffII}
\cG^\eps&=
\begin{pmatrix}
\eps E-\tilde \lambda\tilde A_{11}^{-1} &0& 
-\eps E-\tilde \lambda\tilde A_{11}^{-1}\tilde A_{12}\\
0&0& \eps E-\tilde \lambda\\
\tilde b^{-1}\tilde A_{21}\tilde A_{11}^{-1}& -\tilde b^{-1}& 
\tilde b^{-1}(\tilde A_{22}-\tilde A_{21}\tilde A_{11}^{-1}\tilde A_{12})
\end{pmatrix}
= \hat \cG + O(\eps e^{-\eta|x|}),
\\
\ea
where $\hat G$ is bounded and converges at uniform exponential
rate to its limits at $\pm \infty$.
Applying the convergence lemma, Lemma \ref{evanslimit}, for $\tilde x\ge -M$,
together with Remark \ref{varconj},
\eqref{altconv}, similarly as in Section \ref{fastzone}
but now treating $\tilde \lambda$ as a fixed parameter,
we find that there is a change of
coordinates $\hat P^\eps_+$ with  $\hat P^\eps_+=I+ O(\eps )$ for
$\tilde x \ge -M$, such that
$\cW:=\hat P^\eps_+\hat \cW$ converts \eqref{1linrNS}--\eqref{coeff} to the
viscous shock system \eqref{sfirst}.
Evaluating at $\tilde x=-M$, we thus have
$ \cW_j^+(\lambda, -M)= (I+ O(\eps)) \hat \cW_j^+(\tilde \lambda, -M), $
or, by the assumption that $|\tilde \lambda|>>\eps$,
\be\label{further+decayhatW}
\cW_j^+(\lambda, -M)= \big(I+ o(\tilde \lambda)\big) 
\, \hat \cW_j^+(\tilde \lambda, -M),
\qquad j=n+2, \dots, n+1+r .
\ee

\subsection{Slow zone $\tilde x \le -M$}

\subsubsection{Case a. $C/\eps\ge |\lambda|\ge 1/C\eps$, $C>0$ arbitrary}\label{IIa}
We first treat the easier case $1/C\eps\le |\lambda|\le C/\eps$, or
$ C^{-1}\le |\tilde \lambda|\le C$, for arbitrary $C>0$.
From (H3), it follows that $\hat G(\lambda, -\infty)$ has no pure
imaginary eigenvalues for $\Re \tilde \lambda>0$ and $\lambda\ne 0$.
By continuity, the same holds for $\cG(\lambda, \tilde x)$ for
$\tilde x\le -M$, $\Re \tilde \lambda>0$ and $\tilde \lambda$ bounded
and bounded away from zero.

By the assumption $1/C\le |\tilde \lambda|\le C$, therefore, the
stable and unstable subspaces of $\cG$ have a uniform spectral gap
for all $\tilde x\le -M$, hence, by standard matrix perturbation
theory \cite{K,ZH,Z5,GMWZ5}, there exist smooth 
transformations $T(\cG)$ such that 
$$
T^{-1}\cG T=\bp M_1&0\\0&M_2\ep,
\qquad \Re M_1\ge \eta>0,
\quad 
\Re M_2\le -\eta<0.
$$
Making the change of variables $\cW=TZ$, we thus obtain
$$
\dot Z=(T^{-1}\cG T -T^{-1}\dot T)Z=
\Big(\bp M_1&0\\0&M_2\ep + o(1)\Big)Z
$$
for  $\tilde x\le -M$.
Applying the tracking lemma, Lemma \ref{reduction}, we find that
the manifold of solutions of \eqref{1linrNS}--\eqref{coeff}
decaying at $-\infty$
is within angle $o(1)$ of the unstable subspace of $\cG(\lambda, \tilde x)$
at each $\tilde x\le -M$, in particular for $\tilde x=-M$.

But, by the same reasoning, the manifold of decaying solutions of 
\eqref{sfirst} at $\tilde x=-M$ is also within angle $o(1)$ of
the unstable subspace of $\hat \cG$, which, by continuity in $\eps$/uniform
spectral gap is within angle $O(\eps)=o(1)=o(\tilde\lambda)$ 
of the unstable subspace of $\cG$.
From this, and \eqref{further+decayhatW},
it follows that, up to a normalizing factor $\hat \Psi(\eps, \tilde \lambda)$
that may be taken analytic in $\tilde \lambda$, 
\be\label{g1}
\frac{D_{rNS}(\lambda)}{ \hat \Psi(M,\eps, \lambda)}=
D_{NS}(\tilde \lambda ) +o(\tilde \lambda)
\ee
on $\Re \lambda >-\eta$, $ C^{-1}\le |\tilde \lambda|\le C$, 
uniformly as $M\to \infty$, and $\eps \to 0$, for $C>0$ arbitrary.

\subsubsection{Case b. $1/C\eps\ge |\lambda|\ge C>>1$}\label{IIb}
Finally, we treat the more delicate case 
$C\le |\lambda|\le 1/C\eps$, 
or $ C\eps\le |\tilde \lambda|\le C^{-1}$, with $C>>1$.
Proceeding as in Section \ref{slowzone}
by the series of coordinate transformations
\eqref{cZeq}--\eqref{Z1eq}, but taking account of the different
order of $\tilde \lambda$ in this case, in particular, noting
that $\eps m$ in \eqref{H} is now $O(\tilde \lambda)$ and 
not $O(\eps)$ as before, we obtain 
\be\label{cKII}
\cK^\eps = 
\begin{pmatrix} (\eps E-\tilde \lambda)\tilde A^{-1}+\tilde \lambda^2 \beta &0\\ 
0&N\\ \end{pmatrix}
+
\bp  O(\eps|\tilde \lambda| + |\tilde \lambda||\dot{\bar W}^\eps|)& 
O(\eps|\tilde \lambda| + |\tilde \lambda||\dot{\bar W}^\eps|)\\
\\ O(\eps + |\dot{\bar W}^\eps|) & O(\eps + |\dot{\bar W}^\eps|)\ep
\ee
in place of \eqref{cK}, 
where $\beta$ is the ``frozen-coefficient corrector'' obtained by dropping
terms involving $\tilde x$-derivatives of the transformations involved,
\be\label{SinvII}
S^{-1}=\bp I & -\Phi_1\\ -\Phi_2 & I\ep=
\bp I & |\tilde \lambda|O(\eps + |\dot{\bar W}^\eps|)\\ 
O(\eps + |\dot{\bar W}^\eps|) & I\ep
=
\bp I& o(|\tilde \lambda|)& o(|\tilde \lambda|)\\
 o(1)& I& o(1)\\
 o(1)&  o(1)& I\ep
\ee
for $\tilde x\ge -M$
in place of \eqref{Sinv}, and 
\be\label{fastexpII}
\cW_{n+1}^-= c(- M)
\bp o(1)|\tilde \lambda| v \\ (I+o(1))v 
\ep
\ee
in place of \eqref{fastexp}, where
$v$ lies in the unique stable eigendirection of $N(-M)$, with
$$
N:=\tilde b^{-1}(\tilde A_{22}- \tilde A_{21}\tilde A_{11}^{-1}\tilde A_{12}).
$$

By the same reasoning, applied to the viscous shock equations
\eqref{sfirst}, we have 
$$
\hat \cW_{n+1}^-=  c(- M) \bp o(1)|\tilde \lambda| \hat v \\ (I+o(1))\hat v \ep,
$$
where $\hat v$ lies in the same unique eigendirection.
and $\hat \cW_{n+1}$ is the single fast-decaying mode 
of \eqref{sfirst} at $-\infty$. Comparing, we thus have, for some
normalizing factor $\psi(M,\eps, \lambda)$,
analytic in $\lambda$,
\be\label{fastcor}
\cW_{n+1}^-(-M)=
\big(I+o(|\tilde \lambda|)\big)
\psi(M,\eps, \tilde \lambda)
\hat \cW_{n+1}^-( -M).
\ee

We now turn to the description of the remaining, slow-decaying, modes
$ \cW^-_{1},\dots \cW^-_{n}$.
Focusing on the region $\tilde x\ge -C_2|\log |\tilde \lambda||$,
$C_2>>1$, we find that the slow, first, equation of \eqref{decoup} becomes
\ba\label{Z1eqII}
\dot \cZ_1&=\Big((\eps E-\tilde \lambda) \tilde A^{-1} 
+\beta \tilde \lambda^2\Big) \cZ_1 + 
O(\eps +|\dot{\bar W}^\eps(x/\eps)|)|\tilde \lambda|\cZ_1 \\
&=
\Big((\eps E-\tilde \lambda) \tilde A^{-1} +\beta \tilde \lambda^2\Big) \cZ_1 
+ o(1)|\tilde \lambda|^2 \cZ_1, 
\ea
where
$\big((\eps E-\tilde \lambda) \tilde A^{-1} +\beta \tilde \lambda^2\big)$
agrees to second order in $\tilde \lambda <<1$ with the restriction of
$\cG$ to its slow subspace, which, recall, is the direct sum of 
$n$ slow-decaying modes as $\tilde x\to -\infty$ and a single slow-growing mode
as $\tilde x\to -\infty$.
Here, $o(1)\to 0$ as $C\to \infty$ and $\eps\to 0$.

Computing explicitly, and noting that $\bar z\sim e^{-\theta \eps |\tilde x|}$,
we have
$$
\eps E=
\eps \begin{pmatrix} qkd\phi(\bar u^\eps)\bar z  & qk\phi(\bar u) \\ 
-kd\phi(\bar u^\eps)\bar z & -k\phi(\bar u)\end{pmatrix}
=
\eps \begin{pmatrix} 0  & qk\phi(\bar u) \\ 0 & -k\phi(\bar u)\end{pmatrix}
+\alpha(x,\tilde \lambda),
$$
where
$\alpha=O(\eps e^{-\theta \eps |\tilde x|})$ is both $o(\tilde \lambda)$
and uniformly bounded in $L^1(\tilde x)$.
Thus, up to $O(\alpha)+o(|\tilde \lambda|^2)$ the eigenvalues of
$\big((\eps E-\tilde \lambda) \tilde A^{-1} +\beta \tilde \lambda^2\big)$
agree with the growth rates $\mu_j$ of slow modes 
$e^{\mu_j \tilde x}W_j$, $W_j=\const$,
of the nonreacting frozen-coefficient operator
\be\label{ccsys}
\tilde \lambda I+
 \begin{pmatrix}df(u)-sI & 0 \\ 0 & -s\end{pmatrix}\partial_{\tilde x}
+\begin{pmatrix}B(u) & 0 \\ 0 & C(u,z)\end{pmatrix}\partial_{\tilde x}^2
+
\begin{pmatrix} 0  & qk\phi(u) \\ 0 & -k\phi(u)\end{pmatrix}
\ee
obtained by neglecting $O( \partial_{\tilde x} \bar W^\eps)$
derivative terms and setting $\bar z$ to zero.

A standard low-frequency matrix perturbation computation
\cite{Z1,LyZ1,LyZ2,TZ4,LRTZ} shows that the modes $W_j$ 
of \eqref{ccsys}
 are analytic in
$\tilde \lambda$ and $\eps$, lying approximately in the eigendirections
of $\tilde A$, with the associated growing (i.e., negative real part) 
eigenvalue $\mu_{n+1}$ separated in modulus by order $|\tilde \lambda|$
from the decaying ones;\footnote{
This does not require strict hyperbolicity of
$df$, but only $\det(df-s)\ne 0$; see Remark \ref{semirmk}.}
moreover, by \eqref{H4}, 
the growing eigenvalue is separated in
 real part by order $\sim |\tilde \lambda|^2$
from associated decaying (positive real part) eigenvalues.

From this, and the fact that the entire coefficient matrix is order $\tilde \lambda$, it follows that there exists 
a smooth matrix-valued function  $Q(\bar W^\eps)$ with
$$
Q^{-1} \big((\eps E-\tilde \lambda) \tilde A^{-1} +\beta \tilde \lambda^2\big) Q=
\bp m_1+\alpha_1& 0\\0&m_2+\alpha_2\ep,
$$
\be\label{sep}
\Re m_1\ge \check \eta |\tilde \lambda|^2>0,
\quad 
\Re m_2\le -\check \eta |\tilde \lambda|^2<0,
\quad
|\alpha_j|_{L^1(\tilde x)}\le C,
\qquad \check \eta=\const.
\ee
Making the change of coordinates $\cZ_1= Qz$, and
noting that $Q'=O(\dot{\bar W}^\eps)$, we thus obtain
$$
\dot z=
\Big( Q^{-1} 
\big((\eps E-\tilde \lambda) \tilde A^{-1} +\beta \tilde \lambda^2\big) Q
-Q^{-1}\dot Q\Big)z
=
\bp m_1& 0\\0&m_2\ep z + \big( o(|\tilde \lambda|^2) 
+ O(\dot{\bar W}^\eps) \big)z,
$$
with gap condition \eqref{sep}.

Since $|\dot{\bar W}^\eps|= O(\eps)=o(\tilde \lambda)$, 
and $m_j$ are spectrally separated by modulus $\sim|\tilde \lambda|$,
there is a further smooth coordinate change $z=\cR y$ with
$\cR=I+o(1)$ converting the equations to
$$
\dot y=
\bp m_1+\check \alpha_1& 0\\0&m_2+\check \alpha_2\ep z + 
\big( o(|\tilde \lambda|^2) + 
\beta \big)z,
$$
$\check \alpha_j=\alpha_j + O(\dot{\bar W}^\eps)$,
$\beta= o(\dot{\bar W}^\eps) \big)$, 
where $|\check \alpha_j|_{L^1}\le C$, 
$|\beta|_{L^1(\tilde x)}|= o(1)$, and $m_j$ satisfy \eqref{sep}.
That is, we have an equation of form \eqref{blockdiag2} with
$\delta=o(|\tilde \lambda|^2)$ and $\eta\sim |\tilde \lambda|^2$,
so that $\delta/\eta=o(1)$.

Applying the tracking lemma, Lemma \ref{reduction},
as generalized in \eqref{Phibd2}, Remark \ref{L1}, 
at $\tilde x=-C|\log |\tilde \lambda||$, and untangling coordinate
changes, we thus find that the slow-decaying modes $\cW_{j}^-$,
$j=1,\dots, n$ lie within angle $o(1)$ of the stable subspace of
$\cG(-C|\log |\tilde \lambda||)$, which in turn lies within angle $o(1)$
of the stable subspace of $\hat G(\tilde \lambda, -\infty)$ and
(by a repetition of the same argument), analytic multiples of 
the slow-decaying modes $\hat \cW_j^-$, $j=1,\dots, n$, at $\tilde x=-M$.

Finally, going back to the original equation \eqref{Z1eqII}, and noting
that $\dot{\cZ}_1=O(|\tilde\lambda|)\cZ_1$, we find that the change
in $\cZ_1$ in evolving from $\tilde x=-C|\log |\tilde \lambda||$ to
$\tilde x=-M$ is order 
$$
\big(e^{|\tilde \lambda||\log |\tilde \lambda||}- 1\big)
|\cZ_1(-C |\log|\tilde \lambda||)|
\sim  |\tilde \lambda||\log |\tilde \lambda||
|\cZ_1(-C |\log|\tilde \lambda||)|
=o(1) |\cZ_1(-C |\log|\tilde \lambda||)|,
$$
hence at $\tilde X=-M$ also the slow-decaying modes $\cW_{j}^-$,
$j=1,\dots, n$ lie within angle $o(1)$ of analytic multiples of 
the slow-decaying modes $\hat \cW_j^-$ at $\tilde x=-M$.
Collecting facts, we have
\be\label{slowII}
\cW_{j}^-(-M)=(I+o(1))\psi_j(M,C_2,\eps, \tilde \lambda)\hat \cW_j^-(-M),
\qquad j=1, \dots, n,
\ee
where $\psi_j$ are nonvanishing and analytic in $\tilde \lambda$.
(Here, we are using also the fact that, by \eqref{cKII}--\eqref{SinvII},
the manifold of all slow modes, both growing and decaying,
stays angle $o(1)$ close to the 
slow subspace of $\cG(\tilde \lambda, \tilde x)$ for all $\tilde x\ge -M$.)

Finally, collecting estimates \eqref{further+decayhatW},
\eqref{fastcor},
and \eqref{slowII},
we see that fast modes $\cW_j^\pm$, up to nonvanishing analytic
factor $\psi_j$, are given by $(I+o(\tilde \lambda))$ times
the correspong fast modes $\hat W_j^\pm$ of the associated
shock stability problem, while slow modes $\cW_j^-$ are
are given by $(I+o(1))$ times
the correspong slow modes $\hat W_j^-$.
Recalling, similarly as in estimate \eqref{basicdet}, that
at $\tilde \lambda=0$, the fast mode $\hat \cW_{n+1}^-$
is a linear combination of the fast modes $\hat \cW_{j}^+$,
so that at $\tilde \lambda$ it remains within angle $O(\tilde \lambda)$
of their combination, we find, applying a column operation
cancelling $\hat \cW_{n+1}^-$ to order $\tilde \lambda$ and factoring
out $\tilde \lambda$ from that column, that we reduce the (rNS)
determinant \eqref{evanseqrNS} to
\be\label{estagain}
\tilde \lambda (1+o(1))(\Pi_j \hat \psi_j)D_{NS}(\tilde \lambda).
\ee
The key observation here is that 
because only fast modes are involved in the vanishing
of $D_{NS}$ at $\tilde \lambda=0$, {\it only fast modes must
be estimated to the sharper relative error $o(\tilde \lambda)$}
in order to obtain the result \eqref{estagain}, with $o(1)$
tolerance sufficing for slow modes.

Thus, we obtain again an estimate
$\frac{D_{rNS}(\lambda)}{ \hat \Psi(M,C,\eps, \lambda)}=
D_{NS}(\tilde \lambda ) +o(\tilde \lambda)$
on $\Re \lambda >-\eta$, $ C\eps \le |\tilde \lambda|\le C^{-1}$, 
where $o(1)\to 0$ uniformly as $M\to \infty$, $C\to \infty$, 
and $\eps \to 0$, with 
$\hat \Psi:=\Pi_j \hat \psi_j$ nonvanishing and analytic in $\tilde \lambda$.
Note that we are using here the assumption that $C>>1$, which
was not needed in case a.

\subsection{Convergence to $D_{NS}$}\label{Nconv}
\bpr\label{pII}
Assuming (H0)--(H3), (P1)--(P3),
For $C/\eps\ge |\lambda|\ge C>>1$, $\Re \lambda \ge 0$,
$\frac{D^\eps_{rNS}(\lambda)}{\eps \lambda \check \Psi(\lambda, \eps)}$
converges uniformly as $\eps \to 0$ to 
$\frac{D_{NS}(\eps\lambda)}{\eps\lambda}$,
where $\check \Psi(\cdot,\cdot)$ is 
a nonvanishing factor that is analytic in $\lambda$.
\epr

\begin{proof}
From \eqref{g1}, \eqref{estagain}, we have in either case a or b that
\be\label{last}
\frac{D_{rNS}(\lambda)}{ \hat \Psi(M,C,\eps, \lambda)}=
D_{NS}(\eps \lambda ) +o(1)\eps \lambda
\ee
on $\Re \lambda >-\eta$, uniformly as $\eps \to 0$.
where $o(1)\to 0$ uniformly as $M\to \infty$, $C\to \infty$, 
and $\eps \to 0$.
Choosing monotone increasing sequences $C_j$, $M_j$ and a monotone
decreasing sequence $\eps_j$ such that $o(1)\le 1/j$ in \eqref{last},
define $\check \Psi(\lambda,\eps)$ to be equal to the function
 $\hat\Psi(\lambda,\eps)$ in \eqref{last} that is associated
with $C_j$, $M_j$, where $j$ is the maximum integer such that
$\eps \le \eps_j$.
Then, $\hat \Psi$ is analytic in $\lambda$ by construction, and,
combining \eqref{last} with the definition of
$\eps_j$, $M_j$, $C_j$, we have
$
\Big|
\frac{D^\eps_{rNS}(\lambda)}{\eps \lambda\hat \Psi(\lambda, \eps)}-
\frac{D_{NS}(\eps \lambda)}{\eps \lambda}
\Big| \le C/j \to 0$
as $\eps \to 0$, giving the result.
\end{proof}

\bc\label{sII}
Assuming (H0)--(H3), (P1)--(P3),
for $C\le |\lambda| \le C/\eps$, $C$ sufficiently large,
on $\Re \lambda>-\eta$ for $\eta$, $\eps>0$ sufficiently small, 
$\eps$ times each zero of $D^\eps_{rNS}$ converges
to a zero of $\frac{D_{NS}(\tilde \lambda)}{\tilde \lambda}$ 
on $\Re \tilde \lambda \ge 0$;
moreover, each zero of $D_{NS}$ on $\Re \tilde \lambda>0$ 
is the limit of $\eps$ times a zero of $D_{rNS}$  
on $\Re \lambda>0$, for $C\le |\lambda| \le C/\eps$.
\ec

\begin{proof}
Noting that zeros of $D^\eps_{rNS}$ agree with zeros of
$\frac{D^\eps_{rNS}(\lambda)}{\eps\lambda \hat \Psi(\lambda, \eps)}$,
we obtain the result by Proposition \ref{pII}
and properties of uniform limits of analytic functions.
\end{proof}

\section{Region III: $|\lambda|\ge C/\eps$, $C>>1$}\label{III}
Finally, we consider the straightforward ``hyperbolic--parabolic'' region
$|\lambda|\ge C/\eps$, $C>>1$, or $|\tilde \lambda |>>1$,
on which zeros of $D_{rNS}$ are prohibited stable by 
basic hyperbolic--parabolic structure/well-posedness of
the underlying problem \eqref{rNS}.

\bpr\label{pIII}
$D_{rNS}$ does not vanish for
$|\lambda|\ge C/\eps$, $C>>1$, $\Re \lambda \ge 0$.
\epr

\begin{proof}
In fast coordinates $\tilde x$,  $|\tilde \lambda|>>1$,
this follows by the same high-frequency analysis used in \cite{MaZ3}
to treat the viscous shock case, based on
the tracking/reduction lemma, Lemma \ref{reduction}.
See Propositon 5.2, \cite{MaZ3}, or Proposition 4.33, \cite{Z2}.
\end{proof}





\appendix
\section{Asymptotic ODE theory}\label{asymptotic}

\subsection{The conjugation lemma}\label{s:conj}

Consider a general first-order system
\be \label{gen1}
W'=A^p(x,\lambda)W
\ee
with asymptotic limits $A^p_\pm$ as $x\to \pm \infty$,
where $p\in \RR^m$ denote model parameters.
\bl [\cite{MeZ1,PZ}]\label{conjlem}
Suppose for fixed $\theta>0$ and $C>0$ that 
\be\label{udecay}
|A^p-A^p_\pm|(x,\lambda)\le Ce^{-\theta |x|}
\ee
for $x\gtrless 0$ uniformly for $(\lambda,p)$ in a neighborhood of 
$(\lambda_0)$, $p_0$ and that $A$ varies analytically in $\lambda$ 
and smoothly (resp. continuously) in $p$ 
as a function into $L^\infty(x)$.
Then, there exist in a neighborhood of $(\lambda_0,p_0)$
invertible linear transformations $P^p_+(x,\lambda)=I+\Theta_+^p(x,\lambda)$ 
and $P_-^p(x,\lambda) =I+\Theta_-^p(x,\lambda)$ defined
on $x\ge 0$ and $x\le 0$, respectively,
analytic in $\lambda$ and smooth (resp. continuous) in $p$ 
as functions into $L^\infty [0,\pm\infty)$, such that
\begin{equation}
\label{Pdecay} 
| \Theta_\pm^p |\le C_1 e^{-\bar \theta |x|}
\quad
\text{\rm for } x\gtrless 0,
\end{equation}
for any $0<\btheta<\theta$, some $C_1=C_1(\bar \theta, \theta)>0$,
and the change of coordinates $W=:P^p_\pm Z$ reduces \eqref{gen1} to 
the constant-coefficient limiting systems
\begin{equation}
\label{glimit}
Z'=A^p_\pm Z 
\quad
\text{\rm for } x\gtrless 0.
\end{equation}
\el

\begin{proof}
The conjugators $P^p_\pm$ are constructed by a
fixed point argument \cite{MeZ1}
as the solution of an integral equation
corresponding to the homological equation
\be\label{homolog}
P'=A^pP-A^p_\pm P.
\ee
The exponential decay \eqref{udecay} is needed to 
make the integral equation contractive with respect to $L^\infty[M,+\infty)$ 
for $M$  sufficiently large.  Continuity of $P_\pm$ with respect 
to $p$ (resp. analyticity with respect to $\lambda$) then follow
by continuous (resp. analytic) dependence on parameters of 
fixed point solutions.
Here, we are using also the fact that \eqref{udecay} plus continuity
of $A^p$ from $p\to L^\infty$ together imply continuity of
$e^{\tilde \theta |x|}(A^p-A^p_\pm)$ from $p$ into $L^\infty[0,\pm\infty)$
for any $0<\tilde \theta < \theta$, in order to obtain the needed
continuity from $p\to L^\infty$ of the fixed point mapping.
See also \cite{PZ,GMWZ5}.
\end{proof}


\br\label{specialform}
In the case that $A$ is block diagonal or
triangular, the conjugators $P_\pm$ may evidently be taken block diagonal
or triangular as well, by carrying out the same fixed-point argument
on the invariant subspace of \eqref{homolog} consisting
of matrices with this special form.
This can be of use in problems with multiple scales; 
see, for example, Thm 1.16, \cite{BHZ}.
\er

\begin{definition}[Abstract Evans function]\label{abevans}
Suppose that on the interior of a set $\Omega$ in $\lambda$, $p$, the 
dimensions of the stable and unstable subspaces of $A_\pm^p(\lambda)$
remain constant, and agree at $\pm \infty$ 
(``consistent splitting'' \cite{AGJ}), and that these subspaces
have analytic bases $R_j^\pm$ extending
continuously to boundary points of $\Omega$.  Then, 
the Evans function is defined on $\Omega$ as
\be\label{eq:evans}
\begin{aligned}
D^p(\lambda)&:=
\det( P^+R_1^+,P^+R_2^+, P^-R_1^-, P^-R_2^-)|_{x=0},\\
\end{aligned}
\ee
where $P_\pm^p$ are as in Lemma \ref{conjlem}.
\end{definition}

\subsection{The convergence lemma}\label{s:conv}
Consider a family of first-order equations 
\be \label{gen2}
W'=A^p(x,\lambda)W
\ee
indexed by a parameter $p$, and satisfying exponential
convergence condition \eqref{udecay} uniformly in $p$.
Suppose further that
\begin{equation} \label{residualest}
|(A^p- A^p_\pm)-
(A^0- A^0_\pm)|
\le C|p|e^{-\theta |x|}, \qquad \theta>0
\end{equation}
and
\begin{equation} \label{newest}
|( A^p- A^0)_\pm)| \le C|p|.
\end{equation}

\begin{lemma}[\cite{PZ,BHZ}] \label{evanslimit}
Assuming \eqref{udecay} and \eqref{residualest}--\eqref{newest},
for $|p|$ sufficiently small,
there exist invertible linear transformations 
$P_+^p(x,\lambda)=I+\Theta_+^p(x,\lambda)$ 
and $P_-^0(x,\lambda) =I+\Theta_-^p(x,\lambda)$ defined
on $x\ge 0$ and $x\le 0$, respectively,
analytic in $\lambda$ as functions into $L^\infty [0,\pm\infty)$, such that
\begin{equation}
\label{cPdecay} 
| (P^p-P^0)_\pm |\le C_1 |p| e^{-\bar \theta |x|}
\quad
\text{\rm for } x\gtrless 0,
\end{equation}
for any $0<\btheta<\theta$, some $C_1=C_1(\bar \theta, \theta)>0$,
and the change of coordinates $W=:P_\pm^p Z$ reduces \eqref{gen2} to 
the constant-coefficient limiting systems
\begin{equation}
\label{cglimit}
Z'=A^p_\pm(\lambda) Z 
\quad
\text{\rm for } x\gtrless 0.
\end{equation}
\end{lemma}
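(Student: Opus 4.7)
The plan is to mimic the fixed-point construction used to prove the conjugation lemma (Lemma \ref{conjlem}), but carried out in a norm that records both the exponential decay in $x$ and the Lipschitz dependence on $p$. For each parameter value, including $p=0$, Lemma \ref{conjlem} already produces a conjugator $P^p_\pm = I + \Theta^p_\pm$ as the unique fixed point, in a suitable weighted $L^\infty$ space on $[\pm M, \pm\infty)$ for $M$ large, of the integral equation
\begin{equation}\label{fp}
\Theta^p_\pm(x) = \int_x^{\pm\infty} e^{(x-y)A^p_\pm} \big(A^p(y)-A^p_\pm\big)\big(I+\Theta^p_\pm(y)\big) e^{(y-x)A^p_\pm}\,dy,
\end{equation}
with integration contour decomposed along the spectral projections of $A^p_\pm$ so that each piece contains only decaying exponentials of $x-y$ in its direction of integration. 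By condition \eqref{udecay} the right-hand side of \eqref{fp} defines a contraction on the Banach space $\{\Theta : \|e^{\bar\theta|x|}\Theta\|_{L^\infty[\pm M,\pm\infty)}<\infty\}$ with contraction constant uniform in $p$ for $|p|$ small.

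Next I would subtract \eqref{fp} at $p$ and at $0$, writing $\Delta := \Theta^p_\pm - \Theta^0_\pm$, to obtain a fixed-point equation
\begin{equation}\label{diff}
\Delta = \mathcal{L}^p_\pm \Delta + \mathcal{N}^{p,0}_\pm,
\end{equation}
where $\mathcal{L}^p_\pm$ is the same linear operator appearing in \eqref{fp} (hence a contraction) and the forcing term $\mathcal{N}^{p,0}_\pm$ collects all the differences of coefficients. Those differences are of three kinds: (i) the residual $(A^p-A^p_\pm)-(A^0-A^0_\pm)$, controlled by \eqref{residualest}; (ii) differences of the bracketed factor $(I+\Theta^0_\pm)$, which contribute terms of order $|p|e^{-\theta|x|}$ through \eqref{residualest} combined with the a priori decay of $\Theta^0_\pm$ from Lemma \ref{conjlem}; and (iii) differences of the matrix exponentials $e^{(x-y)A^p_\pm}-e^{(x-y)A^0_\pm}$, bounded spectral-subspace by spectral-subspace using \eqref{newest} together with smooth perturbation of the spectral projectors of $A^0_\pm$ (which is permissible after shrinking $|p|$, since we are in a neighborhood of a fixed $(\lambda_0,p_0)$ with constant splitting). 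Each contribution to $\mathcal{N}^{p,0}_\pm$ is $O(|p|)e^{-\bar\theta|x|}$ in the weighted norm, so inverting $I-\mathcal{L}^p_\pm$ yields $\|\Delta\|_{\bar\theta} \le C_1|p|$, which is exactly \eqref{cPdecay} on $[\pm M,\pm\infty)$. To pass from $[\pm M,\pm\infty)$ to all of $\pm[0,\infty)$, I would transport by the ODE flow and use Gronwall, which costs only a finite factor depending on $M$. Analyticity in $\lambda$ is inherited from the analytic dependence of the integrand in \eqref{fp}, since contractions preserve analyticity of fixed points.

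The main obstacle is item (iii): controlling $e^{(x-y)A^p_\pm}-e^{(x-y)A^0_\pm}$ over unbounded intervals. The naive bound via Duhamel contains a linear-in-$|x-y|$ growth factor, which would destroy the weighted estimate. The fix is to split $A^p_\pm$ along its spectral subspaces, write the exponential as a block-diagonal operator in those subspaces, and integrate each block only in the direction in which $e^{(x-y)A^p_\pm}$ is bounded; the difference of the projectors is $O(|p|)$ by standard Kato perturbation theory (available because we have assumed consistent splitting uniformly in $p$ near $p=0$), and the difference of the restricted semigroups is then bounded in the weighted norm by absorbing a small $|p|$-dependent correction into the exponent $\bar\theta < \theta$. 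Once this bookkeeping is in place, the rest of the argument is an essentially verbatim repetition of the proof of Lemma \ref{conjlem}; see \cite{PZ,BHZ} for the analogous argument in closely related settings.
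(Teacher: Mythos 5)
Your argument is essentially correct, but it is not the route the paper takes, and the comparison is instructive. You difference the two fixed-point (homological) equations at $p$ and at $0$ and estimate $\Delta=\Theta^p_\pm-\Theta^0_\pm$ directly; the delicate point, which you correctly isolate, is the difference of solution kernels built from $e^{(x-y)A^p_\pm}$ and $e^{(x-y)A^0_\pm}$ over unbounded intervals. (In fact the linear-in-$|x-y|$ Duhamel factor is harmless here even without your spectral splitting, since the forcing already carries a factor $e^{-\theta|y|}$ and the conclusion only asks for the weaker rate $\bar\theta<\theta$; so your ``main obstacle'' is real but slightly overstated, and your appeal to consistent splitting and Kato perturbation of projectors is not needed at the level of generality of the lemma.) The paper instead avoids kernel differences entirely by a change of frame: apply Lemma \ref{conjlem} at $p=0$ and conjugate by $Q=P^0_\pm$, reducing to the case where $A^0\equiv A^0_\pm$ is constant and $P^0\equiv I$. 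The computation \eqref{calc1}--\eqref{calc2} shows that hypothesis \eqref{residualest} survives this well-conditioned change of coordinates --- the $Q^{-1}Q'$ terms cancel in the difference, and the commutator term $Q^{-1}(A^p-A^0)_\pm Q-(A^p-A^0)_\pm$ is $O(|Q-I|)\,|p|=O(|p|e^{-\theta|x|})$ by \eqref{Pdecay} and \eqref{newest} --- so that in the new frame \eqref{residualest} reads simply $|A^p-A^p_\pm|\le C|p|e^{-\theta|x|}$. One then applies Lemma \ref{conjlem} a second time, using that its fixed-point bound is linear in the size of the forcing, to obtain $|P^p_\pm-I|\le CC_1|p|e^{-\bar\theta|x|}$, which is exactly \eqref{cPdecay} since $P^0_\pm\equiv I$ in that frame. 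The paper's reduction is shorter and requires no estimate on differences of matrix exponentials; your direct differencing is more self-contained and is closer to how one would establish smooth (rather than merely Lipschitz) dependence on $p$, at the cost of the extra bookkeeping you describe.
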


\begin{proof}
Applying the conjugating transformation $W\to
(P^0_+)^{-1}W$ for the $p=0$ equations, we may reduce to the
case that $A^0$ is constant, and $P^0_+\equiv I$, noting that
the estimate \eqref{residualest} persists under well-conditioned
coordinate changes $W=QZ$, $Q(\pm \infty)=I$,
transforming to 
\be\label{calc1}
\begin{aligned}
&|\big(Q^{-1}A^pQ-Q^{-1}Q'-A^p_\pm\big) -
\big(Q^{-1}A^0Q-Q^{-1}Q'-A^0_\pm\big)|\\
&\qquad \le |Q\big((A^p-A^p_\pm)-(A^0-A^0_\pm)\big)Q^{-1}|
+ |Q^{-1}(A^p-A^0)_\pm Q-(A^p-A^0)_\pm |,
\\
\end{aligned}
\ee
where
\be\label{calc2}
|Q^{-1}(A^p-A^0)_\pm Q-(A^p-A^0)_\pm |= O(|Q-I|)|(A^p-A^0)_\pm|
= O(e^{-\theta|x|})|p|.
\ee
In this case,
\eqref{residualest} becomes just
$$ |A^p- A^p_\pm| \le
C_1|p|e^{-\theta |x},
$$
and we obtain directly from the conjugation lemma, Lemma
\ref{conjlem}, the estimate
$$
|P^p_+ - P^0_+|=
|P^p_+ - I|\le CC_1|p| e^{-\bar \theta|x|}
$$
for $x>0$, and similarly for $x<0$, verifying the result.
\end{proof}

\br\label{varconj}
In the case $A^p_\pm \equiv \const$, or, equivalently, for which
\eqref{residualest} is replaced by $ |A^p-A^0| \le C_1|p|e^{-\theta |x}, $
we find that the change of coordinates
$W=\tilde P^p_\pm Z$, $\tilde P^p_\pm :=(P^0)_\pm ^{-1}P_\pm ^p$, 
converts \eqref{gen2}
to $Z'=A^0Z$, where $\tilde P^p_\pm =I+\tilde \Theta^p_\pm$ with
\be\label{altconv}
|\tilde \Theta^p_\pm|\le CC_1|p| e^{-\bar \theta|x|}
\quad \hbox{\rm for $x\gtrless 0$}.
\ee
That is, we may conjugate not only to constant-coefficient equations,
but also to exponentially convergent variable-coefficient equations,
with sharp rate \eqref{altconv}.

In the general case $A^p\pm\ne A^0\pm$, we may 
still conjugate \eqref{gen2} to $Z'=A^0Z$ by the 
change of coordinates
$W=\hat P^p_\pm Z$, $\hat P^p_\pm :=(P^0)_\pm ^{-1}Q_\pm P_\pm ^p$, 
where $Q_\pm$ defined by
\be\label{Qdef}
Q'=A^p_\pm Q- QA^0_\pm
\ee
conjugates the constant-coefficient equation $Y'=A^p\pm Y$ to 
$X'=A^0_\pm X$, obtaining bounds
\be\label{altconv2}
\hat P^p_\pm= I+\tilde \Theta^p_\pm, 
\qquad
|\tilde \Theta^p_\pm|\le CC_1|p|
\quad \hbox{\rm for }\; |x|\le C
\ee
valid for {\rm finite values of $x$}.
However, in general, $Q_\pm$ grow without bound as $x\to \pm \infty$.
\er

\br\label{convrmk}
As observed in \cite{PZ},
provided that the stable/unstable subspaces of $A^p_+$/$A^p_-$
converge to those of $A^0_+$/$A^0_-$, as typically holds given
\eqref{newest}-- in particular, this holds by standard matrix
perturbation theory \cite{K} if the stable and unstable eigenvalues
of $A^0_\pm$ are spectrally separated-- 
\eqref{cPdecay} gives immediately uniform convergence
of the Evans functions $D^p$ to $D^0$ on compact sets of $\Omega$,
by definition \eqref{abevans}.
\er

\subsection{The tracking lemma}\label{s:track}
Consider an approximately block-diagonal system
\begin{equation}
W'= \bp M_1 & 0 \\ 0 & M_2 \ep(x,p) + \delta(x,p) \Theta(x,p) W,
\label{blockdiag}
\end{equation}
where $\Theta$ is a uniformly bounded matrix, $\delta(x)$ scalar, 
and $p$ a vector of parameters,
satisfying a pointwise spectral gap condition
\begin{equation}
\min \sigma(\Re M_1^\varepsilon)- \max \sigma(\Re M_2^\varepsilon)
\ge \eta(x)
\, \text{\rm for all } x.
\label{gap}
\end{equation}
(Here as usual $\Re N:= (1/2)(N+N^*)$ denotes the
``real'', or symmetric part of $N$.)
Then, we have the following
{\it tracking/reduction lemma} of \cite{MaZ3,PZ}.

\begin{lemma}[\cite{MaZ3,PZ}] \label{reduction}
Consider a system \eqref{blockdiag} under the gap assumption
\eqref{gap}, with $\Theta^\varepsilon$ uniformly bounded and
$\eta\in L^1_{\rm loc}$.
If $\sup (\delta/\eta)(x)$ is sufficiently small,
then there exist (unique) linear
transformations $\Phi_1(x,p)$ and
$\Phi_2(x,p)$, possessing the same
regularity with respect to $p$
as do coefficients $M_j$ and
$\delta\Theta$, for which the graphs $\{(Z_1,
\Phi_2 Z_1)\}$ and $\{(\Phi_1 Z_2,Z_2)\}$ are
invariant under the flow of \eqref{blockdiag}, and satisfy
\be\label{Phibd}
\sup|\Phi_1|, \, \sup|\Phi_2| \le C \sup(\delta/\eta) 
\ee
and
\ba\label{ptwise}
|\Phi^\varepsilon_1(x)|&\le
C\int_x^{+\infty} e^{\int_y^x \eta(z)dz} \delta(y)dy,
\qquad
|\Phi^\varepsilon_1(x)| \le C
\int_{-\infty}^{x} e^{\int_y^x -\eta(z)dz} \delta(y)dy.
\ea
\end{lemma}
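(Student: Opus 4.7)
The plan is to use the standard matrix-Riccati / graph-transform argument. Writing $W=\bp W_1\\W_2\ep$ and partitioning the perturbation $\delta\Theta$ into $2\times 2$ blocks $\Theta_{ij}$, I would first derive the Riccati equations for $\Phi_1$ and $\Phi_2$: the ansatz $W_2=\Phi_2 W_1$ is invariant under \eqref{blockdiag} precisely when
\[
\Phi_2'=M_2\Phi_2-\Phi_2 M_1+\delta\bigl(\Theta_{21}+\Theta_{22}\Phi_2-\Phi_2\Theta_{11}-\Phi_2\Theta_{12}\Phi_2\bigr),
\]
and symmetrically for $\Phi_1$ with indices swapped. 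The goal is to produce a bounded solution of this nonlinear ODE, with $L^\infty$ norm controlled by $\sup(\delta/\eta)$.

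The key linear tool is a Green's function/dichotomy for the ``vectorized'' linear operator $\mathcal L\Phi:=\Phi'-M_2\Phi+\Phi M_1$. Denoting by $\Psi_j(x,y)$ the propagators of $\Psi_j'=M_j\Psi_j$, the evolution associated to $\mathcal L$ is $\mathcal E(x,y)\Phi=\Psi_2(x,y)\,\Phi\,\Psi_1(y,x)$. The spectral gap \eqref{gap}, applied to the symmetrized generator, gives the sharp pointwise bound $|\mathcal E(x,y)|\le C\exp\bigl(-\int_x^y\eta(z)\,dz\bigr)$ for $y\ge x$. Duhamel then turns the Riccati equation into the fixed-point equation
\[
\Phi_2(x)=-\int_x^{+\infty}\mathcal E(x,y)\,\delta(y)\,Q_2(\Phi_2(y))\,dy,\qquad Q_2(\Phi):=\Theta_{21}+\Theta_{22}\Phi-\Phi\Theta_{11}-\Phi\Theta_{12}\Phi,
\]
which is well-posed thanks to the exponential decay of $\mathcal E$.

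I would then close the argument by Banach contraction in the closed ball $\{\Phi_2\in L^\infty:\|\Phi_2\|_\infty\le 2C\sup(\delta/\eta)\}$, using the elementary identity
\[
\int_x^{+\infty}\exp\Bigl(-\textstyle\int_x^y\eta(z)\,dz\Bigr)\delta(y)\,dy\;\le\;\sup_y(\delta/\eta)(y),
\]
obtained by the change of variable $u=\int_x^y\eta\,dz$ together with $\int_0^\infty e^{-u}\,du=1$. When $\sup(\delta/\eta)$ is small this yields both contraction and self-mapping; the fixed point satisfies the $L^\infty$ estimate \eqref{Phibd}, and the pointwise bound \eqref{ptwise} follows by substituting the fixed point back into the integral equation and using $|Q_2(\Phi_2)|\le C$. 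Construction of $\Phi_1$ is entirely parallel, integrating from $-\infty$ to $x$ against a dual kernel bounded by $Ce^{-\int_y^x\eta\,dz}$ for $y\le x$. The smoothness/continuity of $\Phi_j$ with respect to $p$ is then automatic from the corresponding regularity of the integrand, via standard continuous dependence of Banach fixed points on parameters.

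The main obstacle I anticipate is the \emph{pointwise, $L^1_{\mathrm{loc}}$ nature of the gap} $\eta(x)$: one cannot simply rescale time to reduce to a uniformly hyperbolic setting, and the variable exponential weight $e^{-\int\eta}$ must be propagated through every estimate. The integral identity displayed above is precisely what makes this work --- the smallness hypothesis enters only as a bound on the \emph{ratio} $\delta/\eta$, never requiring $\eta$ to be bounded below by a constant. This flexibility is essential for the degenerating regimes used in the paper, notably the $\eta\sim|\tilde\lambda|^2$ gap in Section \ref{IIb} and the $L^1$-integrated form of the estimate invoked in Remark \ref{L1}. All remaining steps --- the derivation of the Riccati equation, the dichotomy estimate for $\mathcal E$, and the contraction argument --- are routine once this observation is in place.
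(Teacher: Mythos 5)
Your overall strategy --- derive the matrix Riccati equation for $\Phi_2$, establish an exponential dichotomy for the linearized operator $\mathcal L\Phi=\Phi'-M_2\Phi+\Phi M_1$, convert to an integral equation by Duhamel, and close by contraction in a ball of radius $O(\sup(\delta/\eta))$ using the identity $\int e^{-\int\eta}\,\delta\,dy\le\sup(\delta/\eta)$ --- is exactly the paper's proof. (The paper normalizes first by the change of independent variable $d\tilde x/dx=\eta(x)$, reducing to $\eta\equiv 1$; this is the same substitution $u=\int_x^y\eta$ that you perform inside the integral, so your remark that one ``cannot simply rescale'' is not quite right, though your weighted-kernel formulation is an equivalent way of exploiting the $L^1_{\rm loc}$ gap.)

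There is, however, one concrete error: the dichotomy estimate is stated in the wrong direction. For $\mathcal E(x,y)\Phi=\Psi_2(x,y)\,\Phi\,\Psi_1(y,x)$, the gap \eqref{gap} gives decay for $y\le x$, not $y\ge x$: in the scalar constant-coefficient case $M_1=a$, $M_2=b$ with $a-b\ge\eta$, one has $\mathcal E(x,y)=e^{(a-b)(y-x)}$, which grows without bound as $y\to+\infty$. Consequently your fixed-point equation $\Phi_2(x)=-\int_x^{+\infty}\mathcal E(x,y)\,\delta\,Q_2(\Phi_2)\,dy$ integrates against an exponentially growing kernel and does not converge. The bounded solution of the $\Phi_2$-Riccati equation must be obtained by integrating from $-\infty$ to $x$, where $|\mathcal E(x,y)|\le Ce^{-\int_y^x\eta}$; the $\int_x^{+\infty}$ construction with kernel bounded by $Ce^{-\int_x^y\eta}$ is the correct one for $\Phi_1$, whose linearized generator is $M_1\Phi-\Phi M_2$ and which decays in the backward direction. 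In short, you have interchanged the integration directions for $\Phi_1$ and $\Phi_2$; once they are swapped back, the contraction argument, the $L^\infty$ bound \eqref{Phibd}, the pointwise bounds \eqref{ptwise}, and the parameter regularity all go through exactly as you describe, and coincide with the argument in the paper.
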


\begin{proof}
By the change of coordinates $x\to \tilde x$, $\delta \to \tilde \delta:=
\delta/\eta$ with
$d\tilde x/dx=\eta(x)$,
we may reduce to the case $\eta\equiv {\rm constant}= 1$ treated in \cite{MaZ3}.
Dropping tildes and setting $\Phi_2:= \psi_2\psi_1^{-1}$, where 
$(\psi_1^t,\psi_2^t)^t$ satisfies \eqref{blockdiag}, 
we find after a brief calculation that $\Phi_2$ satisfies
\be
\Phi_2'= 
(M_2 \Phi_2 - \Phi_2 M_1) + \delta Q(\Phi_2),
\label{Phieqn}
\ee
where $Q$ is the quadratic matrix polynomial
$
Q(\Phi):=
\Theta_{21} + \Theta_{22}\Phi - \Phi \Theta_{11} + \Phi \Theta_{12} \Phi.
$
Viewed as a vector equation, this has the form
\be
 \Phi_2'= \cM \Phi_2 + \delta Q(\Phi_2),
\label{vectoreqn}
\ee
with linear operator
$\cM \Phi:= M_2 \Phi - \Phi M_1$.
Note that a basis of solutions of the decoupled equation
$ \Phi'= \cM \Phi$
may be obtained as the tensor product $\Phi=\phi \tilde \phi^*$
of bases of solutions of $\phi'=M_2 \phi$ and
$\tilde \phi'= -M_1^* \tilde \phi$, whence we obtain from
\eqref{gap} 
\be
e^{\cM z}\le Ce^{-\eta z}, \quad \hbox{\rm for }\; z>0,
\label{expbd}
\ee
or uniform exponentially decay in the forward direction.

Thus, assuming only that $\Phi_2$ is bounded at $-\infty$, we obtain
by Duhamel's principle the integral fixed-point equation
\be
\Phi_2(x)= \CalT \Phi_2(x):=
\int_{-\infty}^x e^{\cM (x-y)} \delta(y)Q(\Phi_2)(y)
\,dy.
\label{inteqn}
\ee
Using \eqref{expbd}, we find that $\CalT$ is a contraction
of order $O(\delta/\eta)$, hence \eqref{inteqn} determines
a unique solution for $\delta/\eta$ sufficiently small, which,
moreover, is order $\delta/\eta$ as claimed.
Finally, substituting 
$Q(\Phi)=O(1+|\Phi|^2)=O(1)$ in 
\eqref{inteqn}, we obtain
$$
|\Phi_2(x)|\le 
C\int_{-\infty}^x e^{\eta (x-y)} \delta(y)
\,dy
$$
in $\tilde x$ coordinates, or, in the original $x$-coordinates,
\eqref{ptwise}.
A symmetric argument establishes existence of $\Phi_1$ with 
the asserted bounds.
Regularity with respect to parameters is inherited as usual
through the fixed-point construction via the Implicit Function Theorem.
\end{proof}

\br
For $\eta$ constant and $\delta$ decaying at exponential rate
strictly slower that $e^{-\eta x}$ as $x\to +\infty$, 
we find from \eqref{ptwise} that $\Phi_2(x)$ decays like $\delta/\eta$
as $x\to +\infty$,
while if $\delta(x)$ merely decays monotonically as $x\to -\infty$, we
find that $\Phi_2(x)$ decays like $(\delta/\eta)$ as $x\to -\infty$,
and symmetrically for $\Phi_1$.
\er

\br\label{tritrack}
1. A closer look at the proof of Lemma \ref{reduction} shows
that, in the approximately block lower-triangular case,
$\delta \Theta_{21}$ not necessarily small,
there exists a {\rm block-triangularizing} transformation
$\Phi_1=O(\sup|\delta/\eta|)<<1$, under the much less restrictive conditions
$$
\sup \Big(|\delta/\eta|(| \Theta_{11}| +| \Theta_{22}|) \Big)
<1
\; \hbox{\rm and } \;
\sup(|\delta/\eta| |\Theta_{21}|) << \frac{1}{\sup|\delta/\eta|}.
$$

2. Similarly, in the standard, approximately block-diagonal case,
an examination of the proof shows that bounds \eqref{Phibd} 
may be sharpened to
\ba\label{twoPhibds}
\sup|\Phi_1|&\le C \sup(\delta/\eta) 
\Big( \sup|\Theta_{12}|+
\sup(\delta/\eta) \sup(|\Theta_{11}|+ |\Theta_{22}|)
+ \sup(\delta/\eta)^2 \sup|\Theta_{21}| \Big)
,\\
\sup|\Phi_2| &\le C \sup(\delta/\eta) 
\Big( \sup|\Theta_{21}|+
\sup(\delta/\eta) \sup(|\Theta_{11}|+ |\Theta_{22}|)
+ \sup(\delta/\eta)^2 \sup|\Theta_{12}| \Big).
\ea
\er

\br\label{L1}
An important observation of \cite{MaZ3,PZ} is that
hypothesis \eqref{gap} of Lemma \ref{reduction} may be weakened to
\begin{equation}
\min \sigma(\Re M_1^\varepsilon)- \max \sigma(\Re M_2^\varepsilon)
\ge \eta(x) +\alpha(x,p)
\label{gapL1}
\end{equation}
with no change in the conclusions,
for any $\alpha$ satisfying a uniform $L^1$ bound 
$|\alpha(\cdot,p)|_{L^1}\le C_1$.
(Substitute
$e^{\cM x}\le Ce^{C_1}e^{-\eta z}$
for \eqref{expbd}, with no other change in the proof.)
More generally,
\eqref{blockdiag2} may be replaced in Lemma \ref{reduction} by
\begin{equation}
W'= \bp M_1+\alpha_1 & 0 \\ 0 & M_2+\alpha_2 \ep(x,p) + 
(\delta \Theta +\beta)(x,p)W,
\label{blockdiag2}
\end{equation}
where $|\alpha_j|_{L^1}$ are bounded and $|\beta|_{L^1}$ 
is sufficiently small, with the conclusion that
\be\label{Phibd2}
\sup|\Phi_1|, \, \sup|\Phi_2| \le C( \sup(\delta/\eta) +|\beta|_{L^1}).
\ee
(The additional term $\int_{-\infty}^x e^{\cM (x-y)}Q_\beta(\Phi_2)(y) \,dy$,
$Q_\beta(\Phi_2):=
\beta_{21} + \beta_{22}\Phi - \Phi \beta_{11} + \Phi \beta_{12} \Phi$,
now appearing in the righthand side of \eqref{inteqn} is
contractive for $|\beta|_{L^1}$ small.)
These allow us to neglect commutator terms in some of the
more delicate applications of tracking: for example, the
high-frequency analysis of \cite{MaZ3}, or the analysis of
case IIb in Section \ref{IIb}.
\er



\begin{thebibliography}{GMWZ2}

{\footnotesize 

\bibitem [AT]{AT} G. Abouseif and T.Y. Toong,
{\it Theory of unstable one-dimensional detonations,}
Combust. Flame 45 (1982) 67--94.

\bibitem [AGJ] {AGJ} J. Alexander, R. Gardner and C.K.R.T. Jones,
{\it A topological invariant arising in the analysis of
traveling waves}, J. Reine Angew. Math. 410 (1990) 167--212.


\bibitem [AlT] {AlT} R. L. Alpert and T.Y. Toong,
{\it Periodicity in exothermic hypersonic flows
about blunt projectiles,} Acta Astron. 17 (1972) 538--560.


\bibitem[BHZ]{BHZ} B. Barker, J. Humpherys, and K. Zumbrun,
{\it One-dimensional stability of parallel shock
layers in isentropic magnetohydrodynamics,}
preprint (2009).

\bibitem [BeSZ]{BeSZ} M. Beck, B. Sandstede, and K. Zumbrun,
{\it Nonlinear stability of time-periodic shock waves,}
preprint (2008).  


\bibitem[BMR]{BMR} A. Bourlioux, A. Majda, and V. Roytburd,
\emph{Theoretical and numerical structure for unstable one-dimensional
detonations.} SIAM J. Appl. Math. 51 (1991) 303--343.

%
%
%
%

\bibitem[Ch]{Ch} G. Q. Chen, {\it Global solutions to the compressible Navier-Stokes equations for a reacting mixture.}
SIAM J. Math. Anal. 23 (1992), no. 3, 609--634. 

\bibitem[CHNZ]{CHNZ} N. Costanzino, J. Humpherys, T. Nguyen, and K. Zumbrun,
{\it Spectral stability of noncharacteristic boundary layers
of isentropic Navier--Stokes equations,}
to appear, Arch. Rat. Mechanics and Anal.

\bibitem[CF]{CF} R. Courant and K.O. Friedrichs,
{\it Supersonic flow and shock waves,}
Springer--Verlag, New York (1976) xvi+464 pp. 

\bibitem [Er1]{Er1} J. J. Erpenbeck,
{\it Stability of steady-state equilibrium detonations,}
Phys. Fluids 5 (1962),
604--614.

\bibitem [Er2]{Er2} J. J. Erpenbeck,
{\it Stability of idealized one-reaction detonations,}
Phys. Fluids 7 (1964).

\bibitem [Er3]{Er3} J. J. Erpenbeck,
{\it Detonation stability for disturbances of small transverse wave length},
Phys. Fluids 9 (1966) 1293--1306.

\bibitem [Er4]{Er4} J. J. Erpenbeck,
{\it Stability of step shocks.} Phys. Fluids 5 (1962) no. 10, 1181--1187.


\bibitem [FD] {FD} W. Fickett and W.C. Davis,
{\it Detonation,} University of California Press, Berkeley, CA (1979):
reissued as {\it Detonation: Theory and experiment,}
Dover Press, Mineola, New York (2000), ISBN 0-486-41456-6.

\bibitem [FW] {FW} Fickett and Wood,
{\it Flow calculations for pulsating one-dimensional
detonations.} Phys. Fluids 9 (1966) 903--916.

\bibitem[G]{G} R. Gardner,
{\it On the detonation of a combustible gas,}
Trans. Amer. Math. Soc.  277  (1983), no. 2, 431--468.

\bibitem[GZ]{GZ}  R. Gardner and K. Zumbrun,
{\it The Gap Lemma and geometric criteria for instability
of viscous shock profiles}.
Comm. Pure Appl.  Math. 51 (1998), no. 7, 797--855.

\bibitem[GS]{GS} I. Gasser and P. Szmolyan,
{\it A geometric singular perturbation analysis of detonation 
and deflagration waves,} SIAM J. Math. Anal. 24 (1993) 968--986.

\bibitem[GMWZ5]{GMWZ5}
O. Gu{\`e}s, G. M{\'e}tivier, M. Williams, and K. Zumbrun.
{\it Existence and stability of noncharacteristic
hyperbolic-parabolic boundary-layers.}
Preprint, 2008.

\bibitem[HR]{HR} P. Howard and M. Raoofi,
{\it Pointwise asymptotic behavior of perturbed viscous shock profiles,}
Adv. Differential Equations (2006) 1031--1080.

\bibitem[HRZ]{HRZ} P. Howard, M. Raoofi, and K. Zumbrun,
{\it Sharp pointwise bounds for perturbed viscous shock waves,}
J. Hyperbolic Differ. Equ. (2006) 297--373.

%
%

\bibitem[HuZ2]{HuZ2} J. Humpherys and K. Zumbrun,
{\it Efficient numerical stability analysis of detonation waves in ZND},
in preparation.

\bibitem[HLZ]{HLZ} J. Humpherys, O. Lafitte, and K. Zumbrun,
{\it Stability of viscous shock profiles in the high Mach number  
limit,} to appear, CMP (2009).

\bibitem[HLyZ1]{HLyZ1}
J. Humpherys, G. Lyng, and K. Zumbrun,
{\it Spectral stability of ideal gas shock layers,}
To appear, Arch. for Rat. Mech. Anal.

\bibitem[HLyZ2]{HLyZ2} Humpherys, J., Lyng, G., and Zumbrun, K.,
\emph{Multidimensional spectral stability of large-amplitude
Navier-Stokes shocks}, in preparation.

\bibitem[K]{K} T. Kato,
{\it Perturbation theory for linear operators}.
Springer--Verlag, Berlin Heidelberg (1985).

\bibitem[JLW]{JLW} H.K. Jenssen, G. Lyng, and M. Williams.
{\it Equivalence of low-frequency stability conditions for
  multidimensional detonations in three models of combustion,}
Indiana Univ. Math. J. 54 (2005) 1--64.

\bibitem[KS]{KS} A.R. Kasimov and D.S. Stewart,
{\it Spinning instability of gaseous detonations.}
J. Fluid Mech. 466 (2002), 179--203.

\bibitem[L]{L} B. Laurrouturou,
{\it Remarks on a model for combustion waves,}
Nonlinear Analysis, Theory, Methods \& Applications 9 (1985) 905--935.

\bibitem[LS]{LS} H.I. Lee and D.S. Stewart.
{\it Calculation of linear detonation instability: one-dimensional
  instability of plane detonation,}
J. Fluid Mech., 216:103--132, 1990.

\bibitem[LyZ1]{LyZ1} G. Lyng and K. Zumbrun,
{\it One-dimensional stability of viscous strong detonation waves,}
Arch. Ration. Mech. Anal.  173  (2004),  no. 2, 213--277.

\bibitem[LyZ2]{LyZ2} G. Lyng and K. Zumbrun,
\emph{A stability index for detonation waves in Majda's model
for reacting flow}, Physica D, {194} (2004), 1--29.

\bibitem[LRTZ]{LRTZ} G. Lyng, M. Raoofi, B. Texier, and K. Zumbrun,
\emph{Pointwise Green Function Bounds and stability of combustion waves},
J. Differential Equations  233  (2007) 654--698. 

\bibitem[M]{M} A. Majda,
{\it A qualitative model for dynamic combustion}, SIAM J. Appl. Math.,
41 (1981), 70--91.

\bibitem[MaZ3]{MaZ3} C. Mascia and K. Zumbrun,
{\it Pointwise Green function bounds for shock profiles of 
systems with real viscosity.}  
Arch. Ration. Mech. Anal.  169  (2003),  no. 3, 177--263.

\bibitem[MaZ4]{MaZ4} C. Mascia and K. Zumbrun,
\emph{Stability of large-amplitude viscous shock profiles
of hyperbolic-parabolic systems,}
Arch. Ration. Mech. Anal.  172  (2004),  no. 1, 93--131.

\bibitem[Me]{Me} G. M\'etivier,
\textit{Stability of multidimensional shocks}.
Advances in the theory of shock waves, 25--103,
Progr. Nonlinear Differential Equations
Appl., 47, Birkh\"auser Boston, Boston, MA, 2001.

\bibitem[MeZ1]{MeZ1} G. M\'etivier and K. Zumbrun, 
{\it Large viscous boundary layers for noncharacteristic nonlinear 
hyperbolic problems,}
Mem. Amer. Math. Soc.  175  (2005),  no. 826, vi+107 pp.

\bibitem [OZ1]{OZ1} M. Oh and K. Zumbrun, 
\textit{Stability of periodic 
solutions of viscous conservation laws with viscosity- 
1. Analysis of the Evans function},  
Arch. Ration. Mech. Anal. 166 (2003), no. 2, 99--166.

\bibitem[PZ]{PZ}
Plaza, R. and Zumbrun, K., \emph{An Evans function approach to
spectral stability of small-amplitude shock profiles}, J. Disc. and
Cont. Dyn. Sys. 10. (2004), 885-924.

\bibitem [RV]{RV} J.-M. Roquejoffre and J.-P. Vila,
{\it Stability of ZND detonation waves in the 
Majda combustion model,} Asymptot. Anal. 18 (1998), no. 3-4, 329--348.

\bibitem[Sa]{Sa} D. Sattinger,
{\it On the stability of waves of nonlinear parabolic systems}.
Adv. Math. 22 (1976) 312--355.

\bibitem[R]{R} M. Raoofi,
\emph{$L^p$ asymptotic behavior of
perturbed viscous shock profiles,}
J. Hyperbolic Differ. Equ. 2 (2005), no. 3, 595--644.

\bibitem[RZ]{RZ} M. Raoofi and K. Zumbrun,
{\it Stability of undercompressive viscous shock profiles of
  hyperbolic-parabolic systems,}
J. Differential Equations, (2009) 1539--1567.

\bibitem[SS]{SS} B. Sandstede and A. Scheel,
{\it Hopf bifurcation from viscous shock waves,}
SIAM J. Math. Anal. 39 (2008) 2033--2052.

\bibitem[TZ2]{TZ2} B. Texier and K. Zumbrun, 
{\it Galloping instability of viscous shock waves,} 
Physica D. 237 (2008) 1553-1601.

\bibitem[TZ3]{TZ3} B. Texier and K. Zumbrun, {\it Hopf bifurcation of
viscous shock waves in gas dynamics and MHD},
Arch. Ration. Mech. Anal. 190 (2008) 107--140.

\bibitem[TZ4]{TZ4} B. Texier and K. Zumbrun, {\it Transition to longitudinal instability of detonation waves is generically associated with Hopf bifurcation to time-periodic galloping solutions},
preprint (2008).

\bibitem[W]{W} M. Williams,
{\it Heteroclinic orbits with fast transitions: a new
construction of detonation profiles,} preprint (2009).

\bibitem[Z1]{Z1} K. Zumbrun,
{\it Multidimensional stability of planar viscous shock waves,}
Advances in the theory of shock waves, 307--516,
Progr. Nonlinear Differential Equations Appl., 47, Birkh\"auser Boston,
Boston, MA, 2001.

\bibitem[Z2]{Z2} K. Zumbrun, {\it Stability of large-amplitude shock
waves of compressible Navier--Stokes equations,}
with an appendix by Helge Kristian Jenssen and Gregory Lyng,  
in Handbook of mathematical fluid dynamics. Vol. III,  311--533, 
North-Holland, Amsterdam, (2004).

\bibitem[Z3]{Z3} K. Zumbrun, 
{\it Stability of noncharacteristic boundary layers in
the standing-shock limit,} preprint (2009).

\bibitem[Z4]{Z4} K. Zumbrun, 
{\it Conditional stability of unstable viscous shock waves
in compressible gas dynamics and MHD,}
preprint (2008).

\bibitem[Z5]{Z5} K. Zumbrun, 
{\it Planar stability criteria for viscous shock waves of
systems with real viscosity,} in Hyperbolic Systems of Balance Laws, 
CIME School lectures notes, P. Marcati ed., 
Lecture Note in Mathematics 1911, Springer (2004).


\bibitem[ZH]{ZH} K. Zumbrun and P. Howard,
\textit{Pointwise semigroup methods and stability of viscous shock waves}.
Indiana Mathematics Journal V47 (1998), 741--871;
Errata, Indiana Univ. Math. J.  51  (2002),  no. 4, 1017--1021.

\bibitem[ZS]{ZS} K. Zumbrun and D. Serre,
{\it Viscous and inviscid stability of multidimensional 
planar shock fronts,} Indiana Univ. Math. J. 48 (1999) 937--992.

}

\end{thebibliography}
\end{document}